\documentclass[12pt]{scrartcl}

\usepackage[]{amsmath, amssymb,amsfonts,amsthm,mathtools,braket,color,enumerate,stmaryrd}

\usepackage{hhline}
\usepackage{url}
\usepackage{here}
\usepackage{tikz}
\usepackage{tikz-cd}
\usepackage{hyperref}
\usepackage {threeparttable}
\usepackage{multirow}
\usepackage{makecell}
\usepackage{cellspace}
\usepackage{float}
\hypersetup{
  colorlinks   = true, 
  urlcolor     = blue, 
  linkcolor    = blue, 
  citecolor   = red 
}
\usetikzlibrary{decorations}
\usetikzlibrary{arrows}
\tikzstyle{v} = [circle, draw, inner sep=2pt, minimum size=3pt, fill=black]
\tikzstyle{l} = [rectangle, draw, rounded corners]

\theoremstyle{plain}
\newtheorem{theorem}{Theorem}[section]
\newtheorem{lemma}[theorem]{Lemma}
\newtheorem{proposition}[theorem]{Proposition}
\newtheorem{corollary}[theorem]{Corollary}

\theoremstyle{remark}

\theoremstyle{definition}
\newtheorem{definition}[theorem]{Definition}

\newtheorem{example}[theorem]{Example}

\newtheorem{remark}[theorem]{Remark}

\begin{document}

\title{Geometric characterization of $p$-exceptional monomial GAPN functions}
\author{
Masamichi Kuroda
\thanks{
Faculty of Engineering, Nippon Bunri University, Oita 870-0316, Japan. 
\\
Email: kurodamm@nbu.ac.jp
} \and
Kentaro Mitsui
\thanks{
Department of Mathematical Sciences, Faculty of Science, University of the Ryukyus, Okinawa 903-0213, Japan.
\\ 
Email: mitsui@math.u-ryukyu.ac.jp}
}

\date{}

\maketitle

\begin{abstract}
On finite fields of characteristic $p$, PN (perfect nonlinear) functions for odd $p$ and APN (almost perfect nonlinear) functions for even $p$ are well-known classes of highly nonlinear functions. 
GAPN (generalized almost perfect nonlinear) functions were introduced as a generalization of APN functions for even $p$ to all $p$. 
One of the main targets of studies on such highly nonlinear functions is their classification. 
While $p$-exceptional monomial PN and $2$-exceptional monomial APN functions have been classified, the corresponding problem for GAPN functions remains open. 
Here, a polynomial over $\mathbb{F}_p$ is called a $p$-exceptional PN (resp.\ APN, resp.\ GAPN) function if it is a PN (resp.\ APN, resp.\ GAPN) function on $\mathbb{F}_{p^n}$ for infinitely many positive integers $n$. 

In this paper, we give a geometric characterization of $p$-exceptional monomial GAPN functions. 
To this end, for a prime power $q$, we establish a geometric characterization of non-zero polynomials in $\mathbb{F}_{q}[x,y]$ having no absolutely irreducible factor defined over $\mathbb{F}_{q}$. 
\end{abstract}

{\footnotesize \textit{Keywords}: 
perfect nonlinear function, 
almost perfect nonlinear function, 
generalized almost perfect nonlinear function,  
absolute irreducibility, 
geometric irreducibility, 
$p$-exceptional monomial function
}

{\footnotesize \textit{2020 MSC}: 
11T06, 
11T71, 
14G15, 
94A60 
}

\tableofcontents

\section{Introduction} \label{sec: intro}

Let $p$ be a prime number and $n$ be a positive integer. 
We choose an algebraic closure $\overline{\mathbb{F}_p}$ of the finite field $\mathbb{F}_p$ of order $p$. 
Let $\mathbb{F}_{p^n} \subset \overline{\mathbb{F}_p}$ denote the finite field of order $p^n$. 

\begin{definition}
A function $f \colon \mathbb{F}_{p^n} \to \mathbb{F}_{p^n}$ is called a \textit{perfect nonlinear} (\textit{PN}) (resp.\ \textit{almost perfect nonlinear} (\textit{APN})) function if 
\begin{align*}
\# \Set{ x \in \mathbb{F}_{p^n} | D_{a} f(x) \coloneqq f(x+a) - f(x) = b} \leq 1 \ \mbox{(resp.\ $2$)} 
\end{align*}
for any $a \in \mathbb{F}_{p^n}^{\times}$ and any $b \in \mathbb{F}_{p^n}$. 
\end{definition}

PN and APN functions have been studied in wide mathematical areas such as finite geometry, coding theory and cryptography. 
In the case $p=2$, the equality 
\begin{align} \label{algebraic condition:p=2}
D_{a} f(x+a) = D_{a} f(x) \quad (a \in \mathbb{F}_{2^n}^{\times}) 
\end{align}
implies the non-existence of PN functions. 
Since APN functions for odd $p$ do not satisfy the equality (\ref{algebraic condition:p=2}) in general, their properties are quite different from those in the case $p=2$. 
APN functions for even $p$ were generalized to all $p$ in the following way~\cite{KT2017}: 

\begin{definition} \label{def: GAPN}
A function $f \colon \mathbb{F}_{p^n} \to \mathbb{F}_{p^n}$ is called a \textit{generalized almost perfect nonlinear} (\textit{GAPN}) function if 
\begin{align*}
\# \Set{ x \in \mathbb{F}_{p^n} | GD_{a} f(x) \coloneqq \sum_{j \in \mathbb{F}_p} f(x+ja) = b} \leq p  
\end{align*}
for any $a \in \mathbb{F}_{p^n}^{\times}$ and any $b \in \mathbb{F}_{p^n}$. 
\end{definition}

Note that in the case $n=1$, any function $f \colon \mathbb{F}_{p} \to \mathbb{F}_{p}$ is a GAPN function on $\mathbb{F}_{p}$. 
In the case $p=2$, GAPN functions coincide with APN functions. 
In addition, the equality 
\begin{align*} 
GD_{a} f(x+a) = GD_{a} f(x) \quad (a \in \mathbb{F}_{p^n}^{\times}) 
\end{align*}
holds, which is an analogue of the equality (\ref{algebraic condition:p=2}). 

One of the main targets of studies on these highly nonlinear functions is their classification. 
Such a classification is difficult for general polynomials and has been studied mainly for monomials. 
Conjectures are posed for the classifications of monomial PN and APN functions, but they remain unsolved. 
On the other hand, restricting these functions to the class $p$-exceptional (defined below) allows us to classify $p$-exceptional monomial PN and $2$-exceptional monomial APN functions (see Theorems \ref{thm: 2-excep}--\ref{thm: PN functions}). 
The notion of $p$-exceptionality is also important in several related areas. 
For example, the classification of $2$-exceptional monomial APN functions yields a classification of exceptional exponents in coding theory (see \cite{HM2011} for more details).

\begin{definition} \label{def:exceptional}
A polynomial $f \in \mathbb{F}_p[x]$ is called a \textit{$p$-exceptional PN} (resp.\ \textit{APN}, resp.\ \textit{GAPN}) function (over $\mathbb{F}_p$) if $f$ is a PN (resp.\ APN, resp.\ GAPN) function on $\mathbb{F}_{p^n}$ for infinitely many positive integers $n$. 
\end{definition}

The following theorem was conjectured in~\cite{JMW1995} based on the earlier work in~\cite{JW1993}. 
It was subsequently studied by several researchers, including Jedlicka~\cite{Jedlicka2007}, and finally proved by Hernando and McGuire~\cite{HM2011}.

\begin{theorem} \label{thm: 2-excep}
The monomial $f_d(x) = x^d$ is a $2$-exceptional APN function if and only if 
\begin{enumerate}[(i)] 
\item 
$d = 2^i + 2^j$, where $i > j \geq 0$, or 
\item 
$d = 2^{2i+j} - 2^{i+j} + 2^{j}$, where $i > 0$ and $j \geq 0$. 
\end{enumerate}
\end{theorem}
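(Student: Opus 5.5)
The plan has two directions: show that the exponents in (i) and (ii) give APN functions on $\mathbb{F}_{2^n}$ for infinitely many $n$, and show that every other exponent fails for all large $n$. Throughout, I would reduce to exponents $d$ coprime to... rather, to $d$ odd. Since $x^{2d}=(x^d)^2$ and squaring is an additive bijection, $x^d$ and $x^{2d}$ are simultaneously APN. This is why both families carry the free parameter $2^j$, so we may take $j=0$.

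\emph{The ``if'' direction.} These are the classical Gold exponents $2^i+1$ and Kasami exponents $2^{2i}-2^i+1$. Both are APN on $\mathbb{F}_{2^n}$ whenever $\gcd(i,n)=1$, and this holds for infinitely many $n$.

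For Gold, expand
\[
(x+a)^{2^i+1}+x^{2^i+1}=a x^{2^i}+a^{2^i}x+a^{2^i+1}.
\]
This is $\mathbb{F}_2$-affine in $x$. Its kernel consists of $x\in\{0,a\}$ together with the solutions of $(x/a)^{2^i-1}=1$. The latter equation forces $x/a\in\mathbb{F}_{2^{\gcd(i,n)}}=\mathbb{F}_2$, so the kernel is $\{0,a\}$ and each fibre has at most $2$ elements. For Kasami I would cite the standard result (Kasami / Dillon–Dobbertin) rather than reprove it.

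\emph{The ``only if'' direction.} This is the substantive part, and I would use the Janwa–Wilson / Jedlicka / Hernando–McGuire method. The monomial $x^d$ is APN on $\mathbb{F}_{2^n}$ iff the affine surface
\[
\phi_d(x,y,z)=\frac{x^d+y^d+z^d+(x+y+z)^d}{(x+y)(x+z)(y+z)}
\]
has no $\mathbb{F}_{2^n}$-rational points off the planes $x=y$, $x=z$, $y=z$. This reduction is standard: points with $x+y+z$ and pairwise distinct coordinates give a third solution of $D_af(x)=b$.

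By Lang–Weil, if $\phi_d$ has an absolutely irreducible component defined over $\mathbb{F}_2$ (not contained in those planes), then it has rational points for all large $n$. In that case $x^d$ is APN for only finitely many $n$. So it suffices to show that for $d$ odd and not Gold or Kasami, $\phi_d$ has an absolutely irreducible factor over $\mathbb{F}_2$.

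Working projectively, I would study the singular points of the curve $\phi_d(x,y,1)=0$ at infinity and on the lines $x=y$, etc. Write $d=2^k\ell+1$ with $\ell$ odd, and analyse the local branches via Puiseux/intersection multiplicities. The goal is to bound $\sum_P m_P(A)m_P(B)$ for a hypothetical factorisation $\phi=AB$, and show it is smaller than the Bézout count $\deg A\deg B$. One then separates cases. When $d\equiv 3\pmod 4$ (Jedlicka / Janwa–Wilson), one derives a contradiction directly. When $d\equiv 1\pmod 4$, the hard case of Hernando–McGuire, one sets $d=2^i\ell+1$ and compares $\phi_d$ with the Gold polynomial $\phi_{2^i+1}$, which factors into conics/planes. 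From this one shows that any common factor forces $\ell$ to give a Kasami exponent. The remaining step is to show that a non-absolutely-irreducible $\phi_d$ still has an $\mathbb{F}_2$-rational absolutely irreducible component, using Galois conjugation of components and parity counting of degrees.

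\emph{Main obstacle.} The hardest step is the $d\equiv 1\pmod 4$ case, specifically controlling the singularities of $\phi_d$ at the points it shares with $\phi_{2^i+1}$ and proving that no factor is shared unless $d$ is Kasami. I would first attempt the Hernando–McGuire strategy: show $\gcd(\phi_d,\phi_{2^i+1})=1$ unless $\ell=2^i-1$ (the Kasami case), and then apply the multiplicity-bound argument. If a fully self-contained proof is not required, I would simply cite \cite{HM2011} for this step.
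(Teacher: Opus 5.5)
Your route is the one the paper relies on. The paper does not prove this theorem itself; it quotes it from \cite{HM2011}, building on \cite{JW1993}, \cite{JMW1995} and \cite{Jedlicka2007}. The mechanism it describes is exactly yours: an absolutely irreducible factor of $\psi_d$ defined over $\mathbb{F}_2$ rules out exceptionality, and Hernando--McGuire show that such a factor exists for every odd $d$ that is neither Gold nor Kasami. The first half is Weil bound plus B\'ezout, which the paper reproves in general as Proposition~\ref{prop: generalization of key proposition}. Your three-variable $\phi_d(x,y,z)$ is the homogenization of the paper's $\psi_d$ after the substitution $(x,y,z)\mapsto(x,x+w,y)$. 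So, with the Gold and Kasami APN results for $\gcd(i,n)=1$ and the Hernando--McGuire step cited, your argument is complete.

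Your self-contained sketch of the $d\equiv 1 \pmod 4$ step would not work as stated. Coprimality with the Gold curve does not single out the Kasami exponents. Note first that $\phi_{2^i+1}=\prod_{\alpha\in\mathbb{F}_{2^i}\setminus\mathbb{F}_2}(x+\alpha y+(\alpha+1)z)$ is a product of lines, not conics. Now take the Kasami exponent $13=2^2\cdot 3+1$ and $\omega\in\mathbb{F}_4\setminus\mathbb{F}_2$. Restricting $x^{13}+y^{13}+z^{13}+(x+y+z)^{13}$ to the line $x=\omega y+\omega^2 z$ with $y=1$, $z=s$ gives $s+s^4+s^9+s^{12}\neq 0$; by Frobenius the same holds on the conjugate line. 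Hence $\gcd(\phi_{13},\phi_5)=1$, even though $13$ is exceptional.

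So the plan ``$\gcd(\phi_d,\phi_{2^i+1})=1$ unless Kasami, then apply the multiplicity bound'' cannot be the mechanism. Any argument that produced an $\mathbb{F}_2$-rational absolutely irreducible component from that coprimality alone would also prove that $13$ is not exceptional. Your sketch does not identify what actually distinguishes the Kasami exponents, so as written only the cited version of this step stands.
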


Similarly to Theorem \ref{thm: 2-excep}, Hernando, McGuire and Monserrat conjectured the classification of $p$-exceptional monomial PN functions in~\cite{HMM2014}, which was settled by Leducq~\cite{Leducq2015} and Zieve~\cite{Zieve2015}. 

\begin{theorem} \label{thm: PN functions}
The monomial $f_d(x) = x^d$ is a $p$-exceptional PN function if and only if 
\begin{enumerate}[(i)] 
\item 
$d = p^i + p^j$, where $p \geq 3$ and $i \geq j \geq 0$, or 
\item 
$d = ( 3^i + 3^j )/2$, where $p=3$, $i > j \geq 0$ and $i \not \equiv j \mod 2$.
\end{enumerate}
\end{theorem}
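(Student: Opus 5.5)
Theorem \ref{thm: PN functions} is the classification due to Leducq and Zieve, and in the present paper it is quoted as background. Still, here is how I would organize a proof. The strategy is the standard one for exceptional problems: turn the PN condition into a statement about $\mathbb{F}_p$-rational points on a plane curve, then use Lang--Weil (or Hasse--Weil) to show that $p$-exceptionality forces a strong absolute-irreducibility condition.

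\textbf{Step 1: reduction to a curve.} For $f_d(x)=x^d$, homogeneity lets us take $a=1$. Then $f_d$ is PN on $\mathbb{F}_{p^n}$ if and only if $(x+1)^d-x^d$ is a permutation of $\mathbb{F}_{p^n}$. Equivalently, the curve
\begin{align*}
C_d:\ \frac{(x+1)^d-x^d-(y+1)^d+y^d}{x-y}=0
\end{align*}
has no affine $\mathbb{F}_{p^n}$-points off the diagonal, apart from boundedly many exceptions. Suppose $C_d$ has an absolutely irreducible component defined over $\mathbb{F}_p$ that is not the line $x=y$. Then Lang--Weil gives roughly $p^n$ points on it for all large $n$, so $f_d$ cannot be PN for infinitely many $n$. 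Hence $p$-exceptionality implies that no component of $C_d$ other than $x=y$ is both absolutely irreducible and defined over $\mathbb{F}_p$.

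\textbf{Step 2: the ``if'' direction.} I would check this by direct computation. For $d=p^i+p^j$, the difference $(x+1)^d-x^d$ is a Dembowski--Ostrom (affine linearized) polynomial. Its injectivity reduces to showing that $x^{p^i}+x^{p^j}=0$ has only the root $0$, which holds whenever $n/\gcd(n,i-j)$ is odd, and so holds for infinitely many $n$. The Coulter--Matthews exponents $(3^i+3^j)/2$ over $\mathbb{F}_{3^n}$ follow from their classical argument, which rewrites the map in terms of $x^{(3^{k}+1)/2}$ and a known permutation property for suitable $n$.

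\textbf{Step 3: the ``only if'' direction.} This is the main obstacle. Write $d=p^j d'$ with $p\nmid d'$; the Frobenius factor is harmless, so we may assume $p\nmid d$ or handle the $p$-power part separately. I would then analyse the singularities of $C_d$ at infinity and at the points where the tangent cone degenerates. The intersection-multiplicity method of Hernando--McGuire, or Zieve's approach via the monodromy of the map $x\mapsto (x+1)^d-x^d$, shows that an $\mathbb{F}_p$-rational absolutely irreducible component exists unless the ramification data are very special. Concretely, I would:
\begin{enumerate}[(a)]
\item use the point at infinity, where $C_d$ has tangents given by the $(d-1)$-th roots of unity, to show that any Galois-invariant splitting forces $d-1$ or the $p$-adic digits of $d$ to have a special form;
\item deduce that the base-$p$ digit sum of $d$ is at most $2$, giving case (i), except when $p=3$ and the degeneracy permits the halved exponents of case (ii).
\end{enumerate}
Handling $p=3$ separately, and ruling out every other exponent whose digit sum is larger, is the delicate part. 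There I would follow Zieve's argument that an exceptional polynomial of this shape must be a composition of additive polynomials and $x^{(p^k+1)/2}$-type maps.
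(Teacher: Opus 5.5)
The paper does not prove this theorem: it is quoted as background, and the proof is attributed to Leducq~\cite{Leducq2015} and Zieve~\cite{Zieve2015}. Your Step~1 is sound; it is the same Weil-bound/B\'ezout mechanism as Proposition~\ref{prop: generalization of key proposition}(c), with $f=x-y$. Your ``if'' direction is also correct. The gap is in Step~3, which carries the entire difficulty. You give no argument there, and the mechanism you do name cannot reach (b).

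If $p\nmid d(d-1)$, the points at infinity of $C_d$ are the simple points $(\zeta:1:0)$ with $\zeta^{d-1}=1$ and $\zeta\neq 1$. An $\mathbb{F}_p$-rational one lies on a unique component, which is therefore absolutely irreducible and $\mathbb{F}_p$-defined. This yields only a Carlitz--Wan type condition $\gcd(d-1,p-1)=1$, nothing like $w_p(d)\le 2$. For instance, take $p=7$ and $d=44=6\cdot 7+2$. Then $7\nmid 44\cdot 43$ and $\gcd(43,6)=1$, so no point at infinity is $\mathbb{F}_7$-rational. Moreover $\gcd(44,7^n-1)=2$ for every odd $n$, so the standard necessary condition for planar monomials also holds. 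Yet $w_7(44)=8$, and $44$ is not in the list.

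When $p\mid d-1$ the points at infinity carry no information at all. This includes the genuine examples $d=p^i+1$, where the top form of $C_d$ is $(x-y)^{p^i-1}$. Excluding all other exponents needs what the cited papers actually do. Leducq, in the style of Hernando--McGuire, studies the finite singular points of $C_d$ and their intersection multiplicities; Zieve applies the theory of exceptional polynomials to $(x+1)^d-x^d$.

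Your description of that structure is also inaccurate. Take $p=3$ and $d=(3^k+1)/2$, and substitute $x+1=s^2$, $x-1=t^2$, $w=(s+t)^2$. Over $\mathbb{F}_3$ this gives
\[
(x+1)^d-(x-1)^d=D_{(3^k-1)/2}(x,1),\qquad\text{where } D_m(w+w^{-1},1)=w^m+w^{-m}.
\]
So the difference polynomial is, up to affine changes of variable, a Dickson polynomial of degree $d-1$. It is exceptional over $\mathbb{F}_3$ because $(3^k-1)/2$ is odd when $k$ is odd. For odd $k$, the criterion $\gcd\bigl((3^k-1)/2,\,3^{2n}-1\bigr)=1 \iff \gcd(k,n)=1$ is exactly the permutation property your Step~2 needs. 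The structure is not built from $x^{(3^k+1)/2}$, which has even degree and so is not even exceptional over $\mathbb{F}_3$. The ``only if'' direction must establish the dichotomy ``affine additive'' (giving $p^i+p^j$) versus ``Dickson'' (giving the Coulter--Matthews exponents). Showing that no other $(x+1)^d-x^d$ is exceptional over $\mathbb{F}_p$ is precisely the part you have deferred.
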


These results naturally lead us to study the classification of $p$-exceptional monomial GAPN functions. 
Table~\ref{table:p-exceptional GAPN} below lists the known series of such functions for odd $p$. 

\begin{table}[H]
\centering
\caption{The known series of $p$-exceptional monomial GAPN functions $f_d (x) = x^d$ for $p \geq 3$. 
It suffices to consider exponents $d$ with $d\not\equiv0\mod p$ (see Remark~\ref{d not equiv 0 mod p}).
Let $d^{\circ} (f_d)$ denote the algebraic degree of $f_d$. 
Put $[a, b] \coloneqq \{ m \in \mathbb{Z} \mid a \leq m \leq b \}$ for each $a \in \mathbb{Z}$ and each $b \in \mathbb{Z}$. 
The relationships between the families in this table and the known results are discussed in Remark~\ref{rem: table1} below.} 
\label{table:p-exceptional GAPN}
{\fontsize{11}{12}\selectfont
\begin{tabular}{|Sl|Sc|Sc|} \hline
\multicolumn{1}{|Sc|}{Exponents $d$} & $d^{\circ} (f_d)$ & Reference
\\ \hline \hline 
(1) $p^{i_1} + \cdots + p^{i_{p-1}} + 1$, where $i_1 \geq \cdots \geq i_{p-1} \geq 0$, $i_1 \ne 0$.
& $p$ & 
\cite[Prop.~9]{Kuroda2020}
\\ \hline
\makecell[l]{(2) \\~ } 
\makecell[l]{$k_2 p^{i} + k_1$, where $i > 0$, $1 \leq k_1$, $k_2 \leq p-1$, \\ 
$k_1 + k_2 \geq p$, $\gcd(k_1+k_2, p-1)=1$.} 
& in $[p, 2p-3]$ & 
\multirow{2}{*}[-8pt]{Prop.~\ref{ex: exceptionality}}
\\ \cline{1-2}
\makecell[l]{(3) \\~ }
\makecell[l]{
$k_2 p^{2 i} + (p-1) p^i + k_1$, where $i > 0$, $1 \leq k_1$, $k_2 \leq p-1$, \\
$k_1 + k_2 < p-1$, $\gcd(k_1+k_2, p-1)=1$.}  & in $[p+2, 2p-3]$ & 
\\
\hline
  \end{tabular}
}
\end{table}

In this paper, we study the $p$-exceptionality of a monomial GAPN function $f_d(x) = x^d$. 
In the proof of Theorem~\ref{thm: 2-excep}, which classifies $2$-exceptional monomial APN functions, Corollary~1 in \cite{Jedlicka2007} plays an important role. 
This corollary is formulated in terms of a polynomial $\psi_d \in \mathbb{F}_p[x,y]$ associated with the exponent $d$ (see Definition~\ref{def: psid}). 
For example, if $d \not \equiv 0 \mod p$, then 
\begin{align*}
\psi_d (x,y) = \dfrac{\sum_{j \in \mathbb{F}_p} \left( (x+j)^d - (y+j)^d \right)}{(x-y)^p - (x-y)}. 
\end{align*}
This corollary states that if the polynomial $\psi_d$ has an absolutely irreducible factor defined over $\mathbb{F}_2$, then the monomial $f_d$ is not a $2$-exceptional APN function. 
Jedlicka~\cite{Jedlicka2007} and Hernando--McGuire~\cite{HM2011} proved that if $\psi_d \ne 0$, then the polynomial $\psi_d$ has an absolutely irreducible factor defined over $\mathbb{F}_2$ for any exponent $d$ other than those listed in Theorem~\ref{thm: 2-excep}. 
Their result together with \cite[Corollary~1]{Jedlicka2007} leads to the classification stated in Theorem~\ref{thm: 2-excep}. 
As a consequence of Theorem~\ref{thm: 2-excep}, the converse of \cite[Corollary~1]{Jedlicka2007} also holds. 
That is, the monomial $f_d$ is a 2-exceptional APN function if and only if the polynomial $\psi_d$ is non-zero and has no absolutely irreducible factor defined over $\mathbb{F}_2$. 
Our first main result is the following theorem, which extends this geometric characterization from even $p$ to all $p$.

\begin{theorem}
\label{thm:geom. irred. comp. induces not exceptional}
Let $p$ be a prime number and $d$ be a positive integer. 
Then the monomial $f_d(x) = x^d$ is a $p$-exceptional GAPN function if and only if the polynomial $\psi_d$ is non-zero and has no absolutely irreducible factor defined over $\mathbb{F}_p$.
\end{theorem}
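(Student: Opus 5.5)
Both directions rest on one reformulation. For $a\neq 0$ substitute $x=az$; since $f_d$ is a monomial, $GD_a f_d(az)=a^d\,GD_1 f_d(z)$, so it suffices to take $a=1$. Write $g(x)=\sum_{j\in\mathbb{F}_p}(x+j)^d$. The map $x\mapsto g(x)$ is constant on cosets of $\mathbb{F}_p$. Hence $f_d$ is GAPN on $\mathbb{F}_{p^n}$ if and only if
\begin{align*}
g(x)=g(y)\ \Longrightarrow\ x-y\in\mathbb{F}_p \qquad (x,y\in\mathbb{F}_{p^n}).
\end{align*}
Indeed, any fibre of $g$ is then a single coset, of size exactly $p$. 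If instead $g(x)=g(y)$ with $x-y\notin\mathbb{F}_p$, the fibre contains the two cosets $x+\mathbb{F}_p$ and $y+\mathbb{F}_p$, so it has more than $p$ elements.

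Since $g(x)-g(y)$ is divisible by $(x-y)^p-(x-y)=\prod_{j\in\mathbb{F}_p}(x-y-j)$, the definition of $\psi_d$ (Definition~\ref{def: psid}; for $d\not\equiv 0 \bmod p$ it is exactly the quotient) gives the following. Up to the factor vanishing on $x-y\in\mathbb{F}_p$, the GAPN condition on $\mathbb{F}_{p^n}$ says that the curve $\psi_d=0$ has no $\mathbb{F}_{p^n}$-point off the lines $x-y=j$, $j\in\mathbb{F}_p$. For $p\mid d$ I would reduce to $d/p$ via Frobenius, or use the general definition of $\psi_d$; I would treat this as bookkeeping.

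\textbf{Non-exceptional direction.} First, if $\psi_d=0$, then $g(x)=g(y)$ identically. So $g$ is constant, every $x\in\mathbb{F}_{p^n}$ lies in one fibre, and GAPN fails once $p^n>p$, i.e.\ for all $n\ge 2$. Second, suppose $\psi_d$ has an absolutely irreducible factor $h\in\mathbb{F}_p[x,y]$. The Hasse--Weil (Lang--Weil) bound gives $\#\{h=0\}(\mathbb{F}_{p^n})\ge p^n-O(p^{n/2})$. The points of $h=0$ lying on the $p$ lines $x-y=j$ number at most $p\deg h$, unless $h$ is itself one of these lines; the same holds for singular points and points shared with other components. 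So for all large $n$ there is a point with $x-y\notin\mathbb{F}_p$, and GAPN fails for all but finitely many $n$.

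It remains to rule out $h=x-y-j$ dividing $\psi_d$. I would check this directly: such a factor means $\psi_d(x,x-j)=0$, i.e.\ $g'$-type vanishing along the diagonal. I expect either to show it cannot happen, or to absorb it by working with $\psi_d$ divided by its linear factors of this shape. This is a potential subtlety I would verify via the explicit derivative $\psi_d(x,x)=\pm g'(x)$-type computation.

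\textbf{Exceptional direction (main obstacle).} Assume $\psi_d\neq 0$ has no absolutely irreducible factor over $\mathbb{F}_p$. I need infinitely many $n$ for which $\psi_d$ has no $\mathbb{F}_{p^n}$-points with $x-y\notin\mathbb{F}_p$.

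Each $\mathbb{F}_p$-irreducible factor $h$ splits over $\overline{\mathbb{F}_p}$ into $r\ge 2$ Galois-conjugate absolutely irreducible components defined over $\mathbb{F}_{p^r}$. An $\mathbb{F}_{p^n}$-point of $h=0$ is fixed by Frobenius, which permutes these components cyclically. If $\gcd(n,r)=1$, the point therefore lies on all $r$ components. So it lies in their pairwise intersection, a finite set $S_h$ independent of $n$.

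Choosing $n$ coprime to the product of all the $r$'s (infinitely many such $n$, e.g.\ suitable primes), every $\mathbb{F}_{p^n}$-point of $\psi_d=0$ lies in the finite set $S=\bigcup S_h\subset\overline{\mathbb{F}_p}^2$. The hard part is excluding those points of $S$ with $x-y\notin\mathbb{F}_p$. Each point of $S$ has coordinates in some fixed finite field $\mathbb{F}_{p^m}$. Taking $n$ additionally coprime to $m$, any $\mathbb{F}_{p^n}$-rational point of $S$ lies in $\mathbb{F}_{p^{\gcd(n,m)}}^2=\mathbb{F}_p^2$, so $x-y\in\mathbb{F}_p$ automatically. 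Hence, for infinitely many $n$ (for instance primes larger than all the $r$'s and $m$), $f_d$ is GAPN on $\mathbb{F}_{p^n}$.

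The step I expect to need the most care is this last one, together with the precise handling of the trivial factor $(x-y)^p-(x-y)$ in the general definition of $\psi_d$ when $p\mid d$.
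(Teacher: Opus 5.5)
Your proof of the ``if'' direction is correct, and it takes a more elementary route than the paper's. The paper (Lemma~\ref{lem:key}) argues that an $\mathbb{F}_{p^n}$-point on a component that is not geometrically integral must be a singular point, because a variety with a smooth rational point is geometrically integral. Such a point therefore lies in $\mathrm{Sing}(V(\psi_d)_{\mathrm{red}})$, whose points are rational over $\mathbb{F}_{p^{N(\psi_d)}}$. You instead use directly that Frobenius permutes the $r\ge 2$ conjugate components cyclically. A $\sigma^n$-fixed point with $\gcd(n,r)=1$ then lies on all of them, hence in a finite intersection set. This avoids the scheme-theoretic input and reaches the same conclusion.

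The gap is in the ``only if'' direction, at the step you flagged: you must prove that no line $x-y-j$ with $j\in\mathbb{F}_p$ divides $\psi_d$. Your fallback of dividing such factors out is not admissible, because the theorem is about $\psi_d$ itself. Such a line is an absolutely irreducible factor defined over $\mathbb{F}_p$ that contributes no points with $x-y\notin\mathbb{F}_p$. If it could occur, the ``only if'' direction would be false for that $d$. So the coprimality of $\psi_d$ and $\wp(x-y)$ is an essential ingredient, not bookkeeping.

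The paper proves this coprimality in Lemma~\ref{lem: def of psid} by showing that $\phi_{d'}$ is squarefree over $\overline{\mathbb{F}_p}$. A repeated factor would divide both $\partial_x\phi_{d'}=d'\,GD_{d'-1}(x)$ and $\partial_y\phi_{d'}=-d'\,GD_{d'-1}(y)$, which are nonzero polynomials in $x$ alone and in $y$ alone, so it would be constant. The result then transfers to $d$ via $\psi_d=\psi_{d'}^{p^\nu}$.

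Your diagonal computation also works once you carry it out. Putting $y=x-j+z$ gives $\wp(x-y)=z-z^p$ and $GD_{d'}(x-j+z)=GD_{d'}(x+z)$, whence $\psi_{d'}(x,x-j)=-d'\,GD_{d'-1}(x)$.

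In either version you still need $GD_{d'-1}\neq 0$. This does not follow from $GD_{d'}$ being nonconstant, since in characteristic $p$ a nonconstant polynomial can have zero derivative. It follows from the $p$-weight criterion of Proposition~\ref{prop:algebraic degree}~(i): because $p\nmid d'$, you have $w_p(d'-1)=w_p(d')-1\ge p-1$ whenever $\psi_d\neq 0$.
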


This theorem shows that the definition of GAPN functions preserves the geometric characterization of 2-exceptional monomial APN functions. 
Thus, the definition can be regarded as a natural extension of the notion of APN functions in even $p$ from the viewpoint of algebraic geometry. 
Theorem~\ref{thm:geom. irred. comp. induces not exceptional} follows as a corollary of a more general proposition, which is proved in Section~\ref{Proof of Thm. 1.6} (see Proposition~\ref{prop: generalization of key proposition}). 
This proposition also yields our second main result (see Theorem~\ref{thm: main 2}).
This theorem gives a geometric characterization of non-zero polynomials in $\mathbb{F}_q[x,y]$ having no absolutely irreducible factor defined over $\mathbb{F}_q$, where $q$ is a power of $p$. 
For clarity, we state the following simplified special case. 

\begin{corollary} \label{thm:geom. char. of red. poly.}
Let $q$ be a prime power. 
Let $h \in \mathbb{F}_q[x,y] \setminus \{ 0 \}$. 
Then the following are equivalent: 
\begin{enumerate}[(i)]
\item 
the polynomial $h$ has no absolutely irreducible factor defined over $\mathbb{F}_q$; 
\item 
there exist infinitely many positive integers $n$ such that $V(h) (\mathbb{F}_{q^n}) \subseteq \mathbb{A}^2(\mathbb{F}_{q})$. 
\end{enumerate}
\end{corollary}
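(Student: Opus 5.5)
Both implications reduce to one comparison between the $\mathbb{F}_{q^n}$-points of $V(h)$ and its $\mathbb{F}_q$-points, using Lang--Weil in one direction and Galois conjugation of irreducible factors in the other.

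\emph{(ii) $\Rightarrow$ (i).} We prove the contrapositive. Suppose $h$ has an absolutely irreducible factor $g \in \mathbb{F}_q[x,y]$. Then $g$ is non-constant, so $V(g)$ is a geometrically irreducible affine curve over $\mathbb{F}_q$. By the Lang--Weil bound, or equivalently the Hasse--Weil bound for the normalization together with a bound on the singular points and the points at infinity, we have
\[
\#V(g)(\mathbb{F}_{q^n}) \geq q^n - C\, q^{n/2}
\]
for a constant $C$ depending only on $\deg g$. For all sufficiently large $n$ this exceeds $q^2 \geq \#\mathbb{A}^2(\mathbb{F}_q)$. Since $V(g)(\mathbb{F}_{q^n}) \subseteq V(h)(\mathbb{F}_{q^n})$, the inclusion $V(h)(\mathbb{F}_{q^n}) \subseteq \mathbb{A}^2(\mathbb{F}_q)$ can hold for only finitely many $n$, which contradicts (ii).

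\emph{(i) $\Rightarrow$ (ii).} Factor $h$ over $\mathbb{F}_q$ into irreducible factors $h_1, \dots, h_r$. Each non-constant $h_k$ is irreducible over $\mathbb{F}_q$ but, by (i), not absolutely irreducible. The standard structure theorem then applies: there are $m_k \geq 2$ and an absolutely irreducible $g_k$ defined over $\mathbb{F}_{q^{m_k}}$ with
\[
h_k = c_k \prod_{s=0}^{m_k-1} g_k^{\sigma^s},
\]
where $\sigma$ is the $q$-Frobenius, $c_k \in \mathbb{F}_q^\times$, and the conjugates $g_k^{\sigma^s}$ are pairwise non-associate. Here $\mathbb{F}_{q^{m_k}}$ is the minimal field of definition of $g_k$. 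Let $M = \prod_k m_k$; if $h$ is a nonzero constant we simply take $M=1$. We take $n$ ranging over the infinitely many positive integers with $\gcd(n, M) = 1$.

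Now fix such an $n$ and a point $P \in V(h)(\mathbb{F}_{q^n})$. Then $P$ lies on some $V(g_k^{\sigma^s})$. Because $P$ has coordinates in $\mathbb{F}_{q^n}$, applying $\sigma^n$ shows that $P$ also lies on $V(g_k^{\sigma^{s+n}})$. Since $\gcd(n, m_k) = 1$, the shifts $s + n t$ run over all residues modulo $m_k$, so $P$ lies on every conjugate $V(g_k^{\sigma^s})$. Two distinct conjugates are coprime absolutely irreducible curves, so by B\'ezout they meet in a finite set $S_k$ of at most $(\deg g_k)^2$ points. This set $S_k$ is stable under $\sigma$, and it does not depend on $n$.

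The main obstacle is to show that $P$ has coordinates in $\mathbb{F}_q$. The fact that $P$ lies in $S_k \cap \mathbb{A}^2(\mathbb{F}_{q^n})$ does not give this on its own for every coprime $n$. To handle it, note that every point of the finite set $S_k$ is defined over some $\mathbb{F}_{q^{e}}$. Let $E$ be the product of the degrees $e$ of the fields of definition of all points in all the $S_k$, and restrict further to $n$ with $\gcd(n, ME) = 1$; there are still infinitely many such $n$. Any point of $S_k$ lies in $\mathbb{A}^2(\mathbb{F}_{q^e})$ for some $e \mid E$, and if it is also $\mathbb{F}_{q^n}$-rational, then it lies in $\mathbb{A}^2(\mathbb{F}_{q^{\gcd(e,n)}}) = \mathbb{A}^2(\mathbb{F}_q)$. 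Hence $V(h)(\mathbb{F}_{q^n}) \subseteq \mathbb{A}^2(\mathbb{F}_q)$ for all these $n$, which proves (ii).
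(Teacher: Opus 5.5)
Your proof is correct. The direction (ii)$\Rightarrow$(i) is the same as in the paper (Proposition~\ref{prop: generalization of key proposition}(d)): the Hasse--Weil lower bound for an absolutely irreducible factor defined over $\mathbb{F}_q$ eventually exceeds $\#\mathbb{A}^2(\mathbb{F}_q)$.

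For (i)$\Rightarrow$(ii) you follow the paper's overall strategy: confine $V(h)(\mathbb{F}_{q^n})$ to a finite set independent of $n$, then choose $n$ coprime to the relevant field degrees. Your key step is different, though. The paper gets the corollary from Theorem~\ref{thm: main 2} with $s=t$, and the engine there is Lemma~\ref{lem:key}. For $n$ with $m_{i,j}\nmid n$, the component of $V(g_i)$ over $\mathbb{F}_{q^n}$ through an $\mathbb{F}_{q^n}$-point is not geometrically integral. So by Lemma~\ref{lem: T subset S} (a variety with a smooth rational point is geometrically integral), every $\mathbb{F}_{q^n}$-point is a singular point of $V(h)_{\mathrm{red}}$. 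The finite singular locus is then handled through $N(h)$.

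You replace that scheme-theoretic criterion with an explicit Galois argument. Frobenius-invariance of $P$, together with $\gcd(n,m_k)=1$, forces $P$ onto every conjugate $g_k^{\sigma^s}$, and B\'ezout makes their common intersection finite. Your route is more elementary and self-contained. The points of your set $\bigcup_k S_k$ lie on two distinct geometric components, so they are singular points of $V(h)_{\mathrm{red}}$; in effect you are giving a hands-on proof of the special case of the paper's lemma needed here.

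The paper's formulation gives a sharper conclusion:
\[
V(h)(\mathbb{F}_{q^n}) = \mathrm{Sing}(V(h)_{\mathrm{red}})(\mathbb{F}_{q^n}) \subseteq \mathbb{A}^2(\mathbb{F}_{p^r}), \qquad r=\gcd(t,N(h)).
\]
This is stated through an intrinsic invariant of $V(h)$, and it is what yields the full range of $s$ in Theorem~\ref{thm: main 2}.

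One cosmetic point: define $S_k$ as the intersection of all the conjugates $V(g_k^{\sigma^s})$, which is finite because it sits inside the intersection of two of them. The intersection of just two conjugates need not be $\sigma$-stable, although you never actually use that stability.
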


This paper is organized as follows. 
In Section \ref{sec: monomial GAPN}, we present basic properties of monomial GAPN functions and provide two examples of $p$-exceptional monomial GAPN functions (see Proposition~\ref{ex: exceptionality}). 
In Section~\ref{Proof of Thm. 1.6}, we prove the main theorems. 
In Section~\ref{sec: examples}, we give examples of polynomials $\psi_d$ for various positive integers $d$ (see Examples~\ref{ex: psid gcd(d,p-1) ne 1}--\ref{ex: psid case(3)}). 

\section{Properties of monomial GAPN functions}
\label{sec: monomial GAPN}

Let $p$ be a prime number, and let $d$ and $n$ be positive integers. 
Put $f_d(x) \coloneqq x^{d} \in \mathbb{F}_{p}[x]$. 
For each polynomial $f \in \mathbb{F}_p[x]$, the function $\mathbb{F}_{p^n} \to \mathbb{F}_{p^n}$, $a \mapsto f(a)$ is denoted by the same notation $f$. 
For each non-negative integer $m$, let $\sum_{s \geq 0} c_s(m) p^s$ denote the $p$-adic expansion of $m$, and define its \textit{$p$-weight} $w_p(m)$ by $w_p(m) \coloneqq \sum_{s \geq 0} c_s(m)$. 
The \textit{algebraic degree} $d^{\circ}(f)$ of a non-zero polynomial function $f(x) = \sum_{m\geq0} c_m x^m$ is defined by $d^{\circ} (f) \coloneqq \max \{ w_p(m) \mid c_m \ne 0 \}$. 
Note that $d^{\circ}(f_d) = w_p(d)$. 

For any $a \in \mathbb{F}_{p^n}^{\times}$, the equalities  
\begin{align} \label{eq: GDafd}
GD_a f_d (x) 
= \sum_{j \in \mathbb{F}_p} (x + ja)^d 
= a^d \sum_{j \in \mathbb{F}_p} \left( \dfrac{x}{a} + j \right)^d
= a^d GD_1 f_d \left( \dfrac{x}{a} \right)
\end{align}
hold. 
Thus, for any $a \in \mathbb{F}_{p^n}^{\times}$ and any $b \in \mathbb{F}_{p^n}$, the following are equivalent: 
$GD_af_d(x) = b$; 
$GD_1f_d(x/a) = b/a^d$. 
Therefore, we have only to verify the case $a=1$ in Definition \ref{def: GAPN}. 
For simplicity, put 
\begin{align*}
GD_d (x) \coloneqq GD_1 f_d(x) = \sum_{j \in \mathbb{F}_p} (x + j)^d.
\end{align*} 

\begin{lemma} \label{lem: monomial GAPN}
Under the above notation, the following are equivalent: 
\begin{enumerate}[(i)]
\item the monomial $f_d(x) = x^d$ is a GAPN function on $\mathbb{F}_{p^n}$; 
\item 
the relation $\alpha - \beta \in \mathbb{F}_p$ holds for any pair of $\alpha \in \mathbb{F}_{p^n}$ and $\beta \in \mathbb{F}_{p^n}$ with $GD_d(\alpha) = GD_d (\beta)$. 
\end{enumerate}
\end{lemma}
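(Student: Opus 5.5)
The plan is to reduce everything to the case $a=1$ using the scaling identity (\ref{eq: GDafd}), and then compare the fibres of $GD_d$ with the cosets $\beta+\mathbb{F}_p$.

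The key structural fact is the periodicity $GD_d(x+1)=GD_d(x)$. This holds because $\sum_{j\in\mathbb{F}_p}(x+1+j)^d$ is a reindexing of the sum defining $GD_d(x)$. Consequently $GD_d$ is constant on each coset $\beta+\mathbb{F}_p$ in $\mathbb{F}_{p^n}$. So for any $b$ in the image of $GD_d$, the fibre $GD_d^{-1}(b)$ is a disjoint union of such cosets, each of cardinality exactly $p$.

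For (ii)$\Rightarrow$(i), fix $a\in\mathbb{F}_{p^n}^{\times}$ and $b\in\mathbb{F}_{p^n}$. By (\ref{eq: GDafd}), the map $x\mapsto x/a$ is a bijection from $\{x\mid GD_af_d(x)=b\}$ onto $\{y\mid GD_d(y)=b/a^d\}$. It therefore suffices to show that each fibre of $GD_d$ has at most $p$ elements. If the fibre is nonempty, pick $\beta$ in it. Condition (ii) says every $\alpha$ in the fibre lies in $\beta+\mathbb{F}_p$, so the fibre has at most $p$ elements.

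For (i)$\Rightarrow$(ii), suppose $GD_d(\alpha)=GD_d(\beta)=:b$. Apply the GAPN condition with $a=1$: the fibre $GD_d^{-1}(b)$ has at most $p$ elements. By periodicity it contains the whole coset $\beta+\mathbb{F}_p$, which already has $p$ elements. Hence the fibre equals $\beta+\mathbb{F}_p$, and since $\alpha$ lies in it, $\alpha-\beta\in\mathbb{F}_p$.

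There is no real obstacle. The only points needing care are the periodicity identity, which is a reindexing of the sum over $\mathbb{F}_p$, and the correct use of the scaling bijection, where $a\neq0$ is needed.
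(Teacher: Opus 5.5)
Your proposal is correct and follows essentially the same route as the paper. Both reduce to $a=1$ via the scaling identity (\ref{eq: GDafd}), observe that every nonempty fibre $S(b)$ of $GD_d$ contains a full coset $x_0+\mathbb{F}_p$ of size $p$, and conclude that $\#S(b)\le p$ for all $b$ is equivalent to (ii). You spell out the bijection $x\mapsto x/a$ and the reindexing behind the periodicity more explicitly than the paper does, but the argument is the same.
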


\begin{proof}
The statement (i) holds if and only if $\# S(b) \leq p$ for any $b \in \mathbb{F}_{p^n}$, where $S(b) \coloneqq \{ x \in \mathbb{F}_{p^n} \mid GD_d(x) = b \}$. 
Note that $\{ x_0 + j \mid j \in \mathbb{F}_p \} \subseteq S(b)$ for any $b \in \mathbb{F}_{p^n}$ and any $x_0 \in S(b)$. 
Thus, the inequality $\# S(b) \leq p$ holds for any $b \in \mathbb{F}_{p^n}$ if and only if the statement (ii) holds. 
\end{proof}

For convenience, put $GD_0(x) \coloneqq 0 \in \mathbb{F}_p[x]$. 
For any positive integer $d$, the equalities 
\begin{align} \label{eq: GDd}
GD_d(x) 
= \sum_{j \in \mathbb{F}_p} (x+j)^d 
= \sum_{j \in \mathbb{F}_p} \sum_{k=0}^{d} \binom{d}{k} j^k x^{d-k} 
= \sum_{k \in S_p(d)} \binom{d}{k} GD_k(0) x^{d-k} 
\end{align}
hold, where 
$S_p(d) \coloneqq \{ k \in [0,d] \mid \mbox{$\binom{d}{k} \ne 0$ in $\mathbb{F}_p$} \}$ and $0^0 \coloneqq 1$. 
Lucas's theorem shows that 
\begin{align} \label{Sp(d)}
S_p(d) = \Set{ k \in [0,d] \mid \mbox{$c_s(k) \leq c_s(d)$ for any $s \geq 0$} }.
\end{align}
For any non-negative integer $k$, the equalities 
\begin{align} \label{eq: GDd(0)}
GD_k(0) = \sum_{j \in \mathbb{F}_p} j^k 
= \left\{ 
\begin{array}{ll}
-1 & (k \ne 0 \ and \ (p-1) \mid k), \\
0 & (otherwise) \\
\end{array}
\right.
\end{align}
hold. 
Note that $GD_k(0)=GD_{w_p(k)}(0)$ for any non-negative integer $k$ since $k \equiv w_p(k) \mod (p-1)$. 

\begin{proposition}[A generalization of {\cite[Proposition 4]{Kuroda2020}}] \label{prop:algebraic degree}
Let $n$ be a positive integer with $n \geq 2$.
\begin{enumerate}[(i)]
\item 
If $w_p(d) \leq p-2$ (resp.\ $w_p(d) = p-1$, resp.\ $w_p(d) \geq p$), then $GD_d(x) = 0 $ (resp.\ $GD_d(x)=-1$, resp.\ $GD_d(x) \not \in \mathbb{F}_p$). 

\item 
If $w_p(d) \leq p-1$, then $f_d(x)=x^d$ is not a GAPN function on $\mathbb{F}_{p^n}$. 

\item 
If $\gcd (d, p-1) \ne 1$, then $f_d(x)=x^d$ is not a GAPN function on $\mathbb{F}_{p^n}$. 
\end{enumerate}
\end{proposition}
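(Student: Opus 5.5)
For (i), I will start from the expansion \eqref{eq: GDd},
\[
GD_d(x)=\sum_{k\in S_p(d)}\binom{d}{k}GD_k(0)x^{d-k},
\]
together with \eqref{Sp(d)} and \eqref{eq: GDd(0)}. A term survives only if $k\ne0$, $k\in S_p(d)$ and $(p-1)\mid k$, the last being equivalent to $(p-1)\mid w_p(k)$. By Lucas's theorem, $k\in S_p(d)$ means that the digits of $k$ are bounded by those of $d$, so $w_p(k)\le w_p(d)$.

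If $w_p(d)\le p-2$, then every such $k\neq 0$ has $1\le w_p(k)\le p-2$, so no term survives and $GD_d=0$.

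If $w_p(d)=p-1$, the only surviving $k$ is $k=d$. Its coefficient is $\binom{d}{d}GD_d(0)=-1$, so $GD_d=-1$.

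If $w_p(d)\ge p$, I will exhibit a $k\in S_p(d)$ with $w_p(k)=p-1$ and $k<d$. I will take $k$ to be the integer obtained by keeping the $p-1$ units of digit mass of $d$ at the \emph{highest} positions. The point of this choice is that $d-k$ is then the nonconstant monomial exponent of minimal degree among contributing terms of the same weight class. The step I expect to be fiddly is checking that no cancellation occurs. Distinct $k$ give distinct exponents $d-k$, so each monomial $x^{d-k}$ appears at most once. Hence it suffices to find one surviving $k\ne d$, i.e. one $k\in S_p(d)\setminus\{0,d\}$ with $(p-1)\mid w_p(k)$. The $k$ above works: $\binom{d}{k}\ne0$ by Lucas, $GD_k(0)=-1$, and $d-k>0$ because $w_p(k)<w_p(d)$. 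This gives $GD_d\notin\mathbb F_p$.

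For (ii), if $w_p(d)\le p-1$ then $GD_d$ is constant by (i). So $GD_d(\alpha)=GD_d(0)$ for all $\alpha\in\mathbb F_{p^n}$. Since $n\ge2$, I can choose $\alpha\notin\mathbb F_p$, and then $\alpha-0\notin\mathbb F_p$. Lemma~\ref{lem: monomial GAPN} therefore shows that $f_d$ is not GAPN.

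For (iii), let $e=\gcd(d,p-1)>1$ and pick $\zeta\in\mathbb F_p^\times$ with $\zeta^d=1$ and $\zeta\ne1$; this exists because $\mathbb F_p^\times$ is cyclic of order $p-1$. The key observation is that
\[
GD_d(\zeta x)=\sum_j(\zeta x+j)^d=\zeta^d\sum_j(x+\zeta^{-1}j)^d=GD_d(x),
\]
since $j\mapsto\zeta^{-1}j$ permutes $\mathbb F_p$. Take $\alpha\in\mathbb F_{p^n}\setminus\mathbb F_p$, which exists since $n\ge2$. Then $\zeta\alpha-\alpha=(\zeta-1)\alpha\notin\mathbb F_p$ because $\zeta-1\in\mathbb F_p^\times$. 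Lemma~\ref{lem: monomial GAPN} again gives the conclusion.

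The only real obstacle is the non-cancellation bookkeeping in the third case of (i). It is resolved by the injectivity of $k\mapsto d-k$ together with choosing a single admissible $k$.
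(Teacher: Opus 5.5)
Your proposal is correct and follows essentially the same route as the paper: the expansion \eqref{eq: GDd} with Lucas's theorem and \eqref{eq: GDd(0)} for (i), constancy of $GD_d$ plus Lemma~\ref{lem: monomial GAPN} for (ii), and the invariance $GD_d(\zeta x)=GD_d(x)$ for a nontrivial $\zeta\in\mathbb{F}_p^{\times}$ with $\zeta^d=1$ (the paper's $a^t$) for (iii). The only superfluous point is the ``highest positions'' choice of $k$ when $w_p(d)\ge p$: as you note yourself, injectivity of $k\mapsto d-k$ means any $k\in S_p(d)$ with $w_p(k)=p-1$ works, and this is exactly what the paper uses.
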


\begin{proof}
\begin{enumerate}[(i)]
\setlength{\parindent}{10pt} 
\setlength{\parskip}{0pt}
\setlength{\parsep}{0.5\baselineskip}

\item 
Assume that $w_p(d) \leq p-2$. 
The equality (\ref{Sp(d)}) shows that $w_p(k) \leq w_p(d) \leq p-2$ for any $k \in S_p(d)$. 
Thus, the equalities (\ref{eq: GDd(0)}) show that 
$GD_k (0) = GD_{w_p(k)} (0) = 0$ for any $k \in S_p(d)$. 
Therefore, the equalities (\ref{eq: GDd}) show that $GD_d(x) = 0$. 

Assume that $w_p(d) = p-1$. 
The equality (\ref{Sp(d)}) shows that $w_p(k) < w_p(d) = p-1$ for any $k \in S_p(d) \setminus \{ d \}$, which implies that $GD_k(x) = 0$ for any $k \in S_p(d) \setminus \{ d \}$. 
Thus, the equalities (\ref{eq: GDd}) and (\ref{eq: GDd(0)}) show that $GD_d(x) = GD_d(0) = GD_{p-1}(0) = -1$. 

Assume that $w_p(d) \geq p$. 
Put $r \coloneqq w_p(d) - p + 1 \geq 1$. 
Then we may take an integer $k \in S_p(d)$ so that $w_p(k) = w_p(d) - r = p-1$. 
Since $GD_k(0) = -1$, the equalities (\ref{eq: GDd}) show that the polynomial $GD_d(x)$ has the non-zero term $-\binom{d}{k} x^{d-k}$, which implies that $GD_d(x) \not \in \mathbb{F}_p$. 

\item 
Proposition~\ref{prop:algebraic degree} (i) shows that if $w_p(d) \leq p-1$, then $GD_d(x)$ is constant. 
In that case, Lemma~\ref{lem: monomial GAPN} shows that $f_d$ is not a GAPN function on $\mathbb{F}_{p^n}$.

\item 
Let $a$ be a generator of the multiplicative group $\mathbb{F}_p^{\times}$.  
Put $t \coloneqq (p-1) / \gcd (d, p-1)$. 
Then $a^t \ne 1$ since $\gcd (d, p-1) \ne 1$. 
Since $a^{td} = 1$ and $GD_{\alpha} f_d(x) = GD_d(x)$ for any $\alpha \in \mathbb{F}_p^{\times}$,
it follows from the equalities \eqref{eq: GDafd}, by replacing $a$ with $a^{-t}$, that
\begin{align} \label{eq: gcd(d,p-1) ne 1}
GD_d(a^t x)
= a^{td} \, GD_{a^{-t}} f_d(x)
= GD_d(x).
\end{align}
Thus, for a fixed element $x_0 \in \mathbb{F}_{p^n} \setminus \mathbb{F}_p$, the equality $GD_d(a^t x_0) = GD_d(x_0)$ and the relation $a^t x_0 - x_0 \not \in \mathbb{F}_p$ hold since $a^t \ne 1$. 
Therefore, Lemma \ref{lem: monomial GAPN} shows that $f_d$ is not a GAPN function on $\mathbb{F}_{p^n}$. 
\qedhere
\end{enumerate}
\end{proof}

\begin{example} \label{GDd wp(d)=p}
Proposition~\ref{prop:algebraic degree} (i) explicitly determines $GD_d(x)$ in the case $w_p(d) \leq p-1$. 
Let us also compute $GD_d(x)$ in the case $w_p(d)=p$.
The equalities (\ref{eq: GDd})--(\ref{eq: GDd(0)}) and Lucas's theorem show that 
\begin{align*}
GD_d(x) 
= - \sum_{\substack{ k \in S_p(d) \\ w_p(k)=p-1 }} \binom{d}{k} x^{d-k} 
= - \sum_{\substack{ s \geq 0 \\ c_s(d)>0 }} \binom{d}{d-p^s} x^{p^s} 
= - \sum_{s \geq 0} c_s(d) x^{p^s}, 
\end{align*}
which implies that $GD_d(x)$ is a linearized polynomial over $\mathbb{F}_p$. 
\end{example}

Proposition~\ref{prop:algebraic degree} (ii)--(iii) show that the monomial $f_d(x)=x^d$ is not a $p$-exceptional GAPN function if either $w_p(d) \leq p-1$ or $\gcd(d,p-1)\ne1$. 
If $w_p(d) = p$, then $\gcd(d, p-1) = 1$, and the monomial $f_d(x) = x^d$ is a $p$-exceptional GAPN function (see Theorem~\ref{thm: 2-excep} and Table~\ref{table:p-exceptional GAPN} (1)). 
While the case (1) in Table~\ref{table:p-exceptional GAPN} is previously known, only partial results are known in the cases (2)--(3) (see Remark~\ref{rem: table1}). 
Proposition~\ref{ex: exceptionality} below completes Table~\ref{table:p-exceptional GAPN} (see Remark~\ref{rem: conditions for (2)-(3)}).

\begin{proposition} \label{ex: exceptionality}
Let $p$ be an odd prime number, and let $i$, $k_1$ and $k_2$ be positive integers with $1 \leq k_1, k_2 \leq p-1$. 
Put $d_2 \coloneqq k_2 p^i + k_1$ and $d_3 \coloneqq k_2 p^{2i} + (p-1) p^i + k_1$. 
\begin{enumerate}[(i)]
\item 
The monomial $f_{d_2}(x) = x^{d_2}$ is a $p$-exceptional GAPN function if and only if $w_p(d_2) \geq p$ and $\gcd(d_2, p-1) = 1$. 
\item 
The monomial $f_{d_3}(x) = x^{d_3}$ is a $p$-exceptional GAPN function if $w_p(d_3) < 2(p-1)$ and $\gcd(d_3, p-1) = 1$. 
\end{enumerate}
\end{proposition}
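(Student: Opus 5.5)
The ``only if'' direction of (i) follows at once from Proposition~\ref{prop:algebraic degree}. If $w_p(d_2)\le p-1$, part (ii) of that proposition shows that $f_{d_2}$ is a GAPN function on no $\mathbb{F}_{p^n}$ with $n\ge2$. If $\gcd(d_2,p-1)\ne1$, part (iii) gives the same conclusion. In either case $f_{d_2}$ is not $p$-exceptional. The real work is the two ``if'' directions, and the plan is to compute $GD_{d}$ explicitly from \eqref{eq: GDd}--\eqref{eq: GDd(0)}. We then verify the criterion, either through Lemma~\ref{lem: monomial GAPN} over a suitable infinite family of $n$, or through Theorem~\ref{thm:geom. irred. comp. induces not exceptional} by showing that $\psi_d$ has no absolutely irreducible factor over $\mathbb{F}_p$.

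\emph{Step 1 (explicit form).} For $d_2$, Lucas's theorem \eqref{Sp(d)} gives $S_p(d_2)=\{ap^i+b \mid 0\le a\le k_2,\ 0\le b\le k_1\}$, with $w_p(ap^i+b)=a+b\le 2p-2$. The value $2p-2$ occurs only when $k_1=k_2=p-1$, and then $\gcd(d_2,p-1)\ne1$. So only the terms with $a+b=p-1$ survive. Put $m:=w_p(d_2)-(p-1)\ge1$, $u:=k_2-a$ and $v:=k_1-b$. We obtain
\begin{align*}
GD_{d_2}(x)=-H\bigl(x^{p^i},x\bigr),\qquad H(X,x):=\sum_{u+v=m}\binom{k_2}{u}\binom{k_1}{v}X^{u}x^{v},
\end{align*}
a binary form of degree $m$ over $\mathbb{F}_p$. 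For $d_3$ the same bookkeeping applies with three digits $(k_2,p-1,k_1)$. The hypothesis $w_p(d_3)<2(p-1)$ again leaves only the weight-$(p-1)$ terms, and $GD_{d_3}(x)=-G(x^{p^{2i}},x^{p^i},x)$ for a ternary form $G$ of degree $w_p(d_3)-(p-1)=k_1+k_2$.

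\emph{Step 2 (the curve).} Next we study $\psi_d$, i.e.\ the curve $H(x^{p^i},x)=H(y^{p^i},y)$ with the lines $x-y\in\mathbb{F}_p$ removed. In the model case $m=1$ we have $H=k_2X+k_1x$, which is additive. Then $GD_{d_2}(x)=GD_{d_2}(y)$ holds exactly when $L(x-y)=0$, where $L(z):=k_2z^{p^i}+k_1z$. Since $k_1+k_2=p$, $L$ vanishes on $\mathbb{F}_p$, and $\psi_{d_2}$ is the product of the lines $x-y=c$ over the roots $c\notin\mathbb{F}_p$ of $L$. None of these lines is defined over $\mathbb{F}_p$, so Theorem~\ref{thm:geom. irred. comp. induces not exceptional} gives exceptionality. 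For general $m$, the condition $\gcd(d_2,p-1)=\gcd(m,p-1)=1$ should play the role that additivity plays above. I would factor $H(X,x)=\prod_\ell(X-\lambda_\ell x)$ over $\overline{\mathbb{F}_p}$ and aim to show that every absolutely irreducible component of the curve is defined only over a proper extension of $\mathbb{F}_p$, so that Frobenius permutes the components without fixing any.

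An alternative route through Lemma~\ref{lem: monomial GAPN} would be to choose infinitely many $n$ with $\gcd(n,i)=1$, so that $x\mapsto x^{p^i}$ generates $\mathrm{Gal}(\mathbb{F}_{p^n}/\mathbb{F}_p)$. One would then try to show that $H(\alpha^{p^i},\alpha)=H(\beta^{p^i},\beta)$ forces $\alpha-\beta\in\mathbb{F}_p$, using that $x\mapsto x^m$ is bijective on $\mathbb{F}_p$ when $\gcd(m,p-1)=1$.

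\emph{Main obstacle.} The hard step is the non-additive case $m\ge2$, and above all the three-digit case $d_3$. There the form $G$ has to be handled, and the hypothesis $k_1+k_2<p-1$ must be exactly what excludes $\mathbb{F}_p$-rational components. I would first try to rewrite $G$ as a $p^i$-twisted composition of the two-digit form $H$, reducing (ii) to the analysis of (i) applied twice.
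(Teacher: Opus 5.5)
\textbf{There is a genuine gap: the ``if'' directions are not proved for $m\ge 2$ or for $d_3$, and the missing ingredient is a closed form for $GD_d$.}

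Your ``only if'' argument is correct, and so is your Step~1 bookkeeping: only weight-$(p-1)$ indices survive for $d_2$, and also for $d_3$ under $k_1+k_2<p-1$. The case $m=1$ is fine too. But the ``if'' directions are the actual content of the proposition. For $m\ge 2$ and for $d_3$ you only describe what you would try, and the mechanism you propose would not work as stated. Factoring $H(X,x)=\prod_\ell(X-\lambda_\ell x)$ factors the one-variable polynomial $GD_{d_2}(x)$, not $GD_{d_2}(x)-GD_{d_2}(y)$. So by itself it gives no handle on the components of $V(\psi_{d_2})$.

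The missing idea is that your forms collapse completely. View $H$ as the coefficient of $t^m$ in $(1+Xt)^{k_2}(1+xt)^{k_1}$ and substitute $X=x+z$. Do the same for $G$, the coefficient of $t^{k_1+k_2}$ in $(1+Zt)^{k_2}(1+Yt)^{p-1}(1+Xt)^{k_1}$, with $Z=Y+z$ and $X=Y+w$. Lucas's theorem gives $\binom{p-1+e}{e}\equiv 0 \bmod p$ for $1\le e\le p-1$, which kills every term but one, so
\[
H(X,x)=\binom{k_2}{m}(X-x)^m, \qquad G(Z,Y,X)=(Z-Y)^{k_2}(X-Y)^{k_1}.
\]
Hence $GD_{d_2}(x)=-\binom{k_2}{m}\wp_i(x)^m$ and $GD_{d_3}(x)=(-1)^{k_1+1}\wp_i(x)^{k_2p^i+k_1}$; in particular all your $\lambda_\ell$ equal $1$. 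The paper obtains these formulas in Examples~\ref{ex: psid case(2)}--\ref{ex: psid case(3)} from the recursion of Lemma~\ref{lem: GDd 1}, which is the precise form of your ``twisted composition'' idea.

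\textbf{Finishing either of your routes.} Once you know $GD_d=c\,\wp_i^u$ with $\gcd(u,p-1)=1$ (here $u=m$, resp.\ $u=k_2p^i+k_1$), either route closes quickly. Each, however, needs a step you did not supply.

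\emph{Via Theorem~\ref{thm:geom. irred. comp. induces not exceptional}.} We have $\wp(x-y)\psi_d=c\prod_{\beta^u=1}\bigl(\wp_i(x)-\beta\wp_i(y)\bigr)$. Let $g\in\mathbb{F}_p[x,y]$ be an absolutely irreducible divisor of $\psi_d$.
\begin{enumerate}[(a)]
\item $g$ cannot divide the $\beta=1$ factor. That factor is $\prod_{\alpha\in\mathbb{F}_{p^i}}(x-y-\alpha)$, and the case $\alpha\in\mathbb{F}_p$ is excluded by Lemma~\ref{lem: def of psid}~(ii).
\item $g$ cannot divide a factor with $\beta\ne1$. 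In that case $\beta\notin\mathbb{F}_p$, and applying Frobenius shows that $g$ also divides $\wp_i(x)-\beta^p\wp_i(y)$. Hence $g$ divides both $\wp_i(x)$ and $\wp_i(y)$, which is impossible. This is Lemma~\ref{lem: good GDd}.
\end{enumerate}

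\emph{Via Lemma~\ref{lem: monomial GAPN}.} Bijectivity of $z\mapsto z^u$ on $\mathbb{F}_p$ is not what you need. From $\wp_i(\alpha)^u=\wp_i(\beta)^u$ you need $\gcd(u,p^n-1)=1$ to conclude $\alpha-\beta\in\mathbb{F}_{p^i}\cap\mathbb{F}_{p^n}=\mathbb{F}_p$. You then have to show that infinitely many $n$ satisfy $\gcd(n,i)=\gcd(u,p^n-1)=1$. This is where $\gcd(u,p-1)=1$ enters: the order of $p$ modulo each prime $q\mid u$ then exceeds $1$, so for example all sufficiently large primes $n$ work. This second route is essentially the paper's own proof. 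It imports the GAPN property for such $n$ from the results recalled in Remark~\ref{rem: table1}~(ii)--(iii), and then carries out exactly this counting.
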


\begin{proof} 
The only if part of the statement (i) follows from Proposition~\ref{prop:algebraic degree} (ii)--(iii).

Let us prove the if parts of the statements (i)--(ii). 
Put $u_2 \coloneqq k_1 + k_2 - (p-1)$ and $u_3 \coloneqq k_2p^i + k_1$. 
Then for any $j \in \{ 2, 3 \}$ and any positive integer $n$ such that $d_j < p^n$ and $\gcd(i,n)=\gcd(u_j, p^n-1)=1$, the monomial $f_{d_j}$ is a GAPN function on $\mathbb{F}_{p^n}$ 
(see Remarks~\ref{rem: conditions for (2)-(3)} and \ref{rem: table1} (ii)--(iii)). 
Let $e_q$ denote the order of $p$ in the multiplicative group $(\mathbb{Z} / q \mathbb{Z})^{\times}$ for each prime number $q$ with $q \ne p$. 
Then $e_q > 1$ if and only if $\gcd (q, p-1) = 1$. 
For each $j \in \{ 2, 3 \}$, put 
\begin{align*}
N_j \coloneqq &\Set{ n \in \mathbb{Z}_{>i(j-1)} | \mbox{$\gcd(i,n) = \gcd (u_j, p^n-1) =1$} } \\
= &\Set{ n \in \mathbb{Z}_{>i(j-1)} | \mbox{$\gcd(i,n) = 1$ and $e_q \nmid n$ for any prime factor $q$ of $u_j$ with $q \ne p$}  }. 
\end{align*}
For any $j \in \{ 2, 3 \}$, the following are equivalent: 
$N_j \ne \emptyset$; 
$e_q > 1$ for any prime factor $q$ of $u_j$ with $q \ne p$; 
$\gcd (u_j, p-1) = 1$; 
$\# N_j = \infty$. 
Since $\gcd(u_j, p-1) = \gcd(d_j, p-1) = 1$ for any $j \in \{ 2,3\}$, the monomials $f_{d_2}$ and $f_{d_3}$ are $p$-exceptional GAPN functions. 
\end{proof}

In the rest of this section, we collect several remarks concerning Table~\ref{table:p-exceptional GAPN}.

\begin{remark} \label{rem: conditions for (2)-(3)}
The notation is as in Proposition~\ref{ex: exceptionality}. 
\begin{enumerate}[(i)]
\item 
Since $w_p(d_2) = k_1+k_2$ and $\gcd(d_2, p-1) = \gcd(k_1+k_2, p-1)$, the conditions $k_1 + k_2 \geq p$ and $\gcd(k_1+k_2, p-1)=1$ in the case (2) in Table~\ref{table:p-exceptional GAPN} are equivalent to the conditions $w_p(d_2) \geq p$ and $\gcd(d_2, p-1)=1$. 
\item 
Since $w_p(d_3) = k_1+k_2 + p-1$ and $\gcd(d_3, p-1) = \gcd(k_1+k_2, p-1)$, the conditions $k_1 + k_2 < p-1$ and $\gcd(k_1+k_2, p-1)=1$ in the case (3) in Table~\ref{table:p-exceptional GAPN} are equivalent to the conditions $w_p(d_3) < 2(p-1)$ and $\gcd(d_3, p-1)=1$. 
\end{enumerate}
\end{remark}

\begin{remark} \label{d not equiv 0 mod p}
Let us explain why Table~\ref{table:p-exceptional GAPN} lists only exponents $d$ with $d \not\equiv 0 \mod p$. 
Although Table~\ref{table:p-exceptional GAPN} concerns only the case $p \geq 3$, the following argument is valid for any prime number $p$.
\begin{enumerate}[(i)]
\item 
Proposition~2.1 in \cite{KT2017} shows that for any positive integer $n$, the GAPN property on $\mathbb{F}_{p^n}$ is invariant under EA-equivalence. 

\item 
Proposition~1.2 in \cite{ZKPLZ2023} shows that for any positive integers $d$, $d'$ and $n$ with $d < p^n$ and $d' < p^n$, the monomials $f_d(x) = x^d$ and $f_{d'}(x) = x^{d'}$ are EA-equivalent on $\mathbb{F}_{p^n}$ if and only if $d \equiv p^\nu d' \mod (p^n-1)$ for some $0 \leq \nu \leq n-1$. 

\item 
Let $d$ be a positive integer. 
Assume that $d \equiv 0 \mod p$. 
Then $d = p^\nu d'$ for some positive integers $\nu$ and $d'$ with $d' \not \equiv 0 \mod p$.
Remark~\ref{d not equiv 0 mod p} (ii) shows that for any positive integer $n$ with $d<p^n$, the monomials $f_d(x) = x^d$ and $f_{d'}(x) = x^{d'}$ are EA-equivalent on $\mathbb{F}_{p^n}$. 
Remark~\ref{d not equiv 0 mod p} (i) shows that $f_d$ is a $p$-exceptional GAPN function if and only if $f_{d'}$ is a $p$-exceptional GAPN function. 
Thus, Table~\ref{table:p-exceptional GAPN} lists only exponents $d$ with $d \not\equiv 0 \mod p$.
\end{enumerate}
\end{remark}

\begin{remark} \label{rem: table1}
We summarize how the families of $p$-exceptional monomial GAPN functions in Table~\ref{table:p-exceptional GAPN} are related to the known results.
Let $d$ and $n$ be positive integers with $d < p^n$. 
\begin{enumerate}[(i)]
\setlength{\parindent}{10pt} 
\setlength{\parskip}{0pt}
\setlength{\parsep}{0.5\baselineskip}

\item 
We consider the case where $d$ is an exponent appearing in the case (1) in Table~\ref{table:p-exceptional GAPN}, that is, $d = p^{i_1} + \cdots + p^{i_{p-1}} + 1$ with certain conditions. 
Theorem~1 in \cite{OS2021} shows that $f_d$ is a GAPN function on $\mathbb{F}_{p^n}$ if and only if $\gcd(1+\sum_{j=1}^{p-1} z^{i_j}, z^n-1) = z-1$ in $\mathbb{F}_p[z]$. 

Theorem~1 in \cite{OS2021} contains the result of \cite[Theorem~8]{Kuroda2020}. 
Moreover, Proposition~9 in \cite{Kuroda2020} shows that $f_d$ is a $p$-exceptional GAPN function. 

\item 
We consider the case where $d$ is an exponent appearing in the case (2) in Table~\ref{table:p-exceptional GAPN}, that is, $d = k_2 p^{i} + k_1$ with certain conditions. 
Put $u \coloneqq k_1 + k_2 - (p-1)$. 
Theorem~2 in \cite{OS2023} shows that $f_d$ is a GAPN function on $\mathbb{F}_{p^n}$ if and only if $\gcd(i,n) = \gcd(u,p^n-1)=1$ in the case $n\geq 3$, and $\gcd(u, p-1)=1$ in the case $n=2$. 
Moreover, if $w_p(d) = p$, then $u=1$ and $d$ is of the form considered in Remark~\ref{rem: table1} (i). 
In that case, the following are equivalent: 
(a) $f_d$ is a GAPN function on $\mathbb{F}_{p^n}$; 
(b) $\gcd(i,n)=1$; 
(c) $\gcd(k_1 + k_2 z^i, z^n-1)=z-1$ in $\mathbb{F}_p[z]$. 
The equivalence of the statements (b)--(c) follows from the equality $k_1 + k_2 z^i = k_2 (z^i - 1)$ in $\mathbb{F}_p[z]$.

Theorem~2 in \cite{OS2023} contains the results of \cite[Theorem~3.3]{PXC2019}, \cite[Theorems~2.4 and 3.1]{ZHZ2018} and \cite[Theorem~2]{OS2021}.

\item 
We consider the case where $d$ is an exponent appearing in the case (3) in Table~\ref{table:p-exceptional GAPN}, that is, $d = k_2 p^{2i} + (p-1) p^i + k_1$ with certain conditions. 
Put $u \coloneqq k_2p^i + k_1$ and $d' \coloneqq k_2 p^{i} + (p-1) + k_1 p^{n-i}$. 
Then $f_d$ and $f_{d'}$ are EA-equivalent on $\mathbb{F}_{p^n}$ since $d \equiv p^{i}d' \mod (p^n-1)$ (see Remark~\ref{d not equiv 0 mod p} (ii)).
Theorem~3.1 in \cite{WWZ2022} shows that if $\gcd(i,n) = \gcd(u, p^n-1) = 1$, then $f_{d'}$ is a GAPN function on $\mathbb{F}_{p^n}$. 
Thus, the monomial $f_d$ is also a GAPN function on $\mathbb{F}_{p^n}$ under the same conditions. 

The special case where $k_1 = p-3$ and $k_2=1$ was considered in \cite[Theorem~3.1]{ZKPLZ2023}.
\end{enumerate}
\end{remark}

\section{The proofs of the main theorems}
\label{Proof of Thm. 1.6}

Let $p$ be a prime number, $i$ be a non-negative integer and $d$ be a positive integer. 
Let $\nu$ denote the non-negative integer such that $p^\nu \mid d$ and $p^{\nu+1} \nmid d$. 
Put $d' \coloneqq d / p^\nu$. 

\begin{definition} \label{def: psid}
\begin{enumerate}[(i)]
\item 
Put $\wp _i (x) \coloneqq x^{p^i} - x$ and $\wp (x) \coloneqq \wp_1 (x) = x^p - x$. 
\item 
Put $\phi_d (x, y) 
\coloneqq 
GD_d(x) - GD_d(y)
= \sum_{j \in \mathbb{F}_p} ( (x + j)^d - (y + j)^d )$. 
\item 
Put $\psi_d(x,y) \coloneqq \dfrac{\phi_d (x, y)}{(\wp(x-y))^{p^\nu}}$. 
\end{enumerate}
\end{definition}

Lemma \ref{lem: psid = 0} (ii) below shows that $\psi_d \in \mathbb{F}_p[x,y]$. 

\begin{lemma} \label{lem: psid = 0}
The notation is as above. 
\begin{enumerate}[(i)]
\item 
The following are equivalent: 
$w_p(d) < p$; 
$\phi_d = 0$; 
$\psi_d = 0$. 

\item 
The polynomial $\phi_d$ is divisible by the polynomial $\wp(x-y)$, and the equality $\phi_d(x,y) = (\phi_{d'}(x,y))^{p^\nu}$ holds.
In particular, the relations $\psi_d(x,y) = (\psi_{d'}(x,y))^{p^\nu} \in \mathbb{F}_p[x,y]$ hold. 
\end{enumerate}
\end{lemma}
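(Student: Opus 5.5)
I will prove part (ii) first, since part (i) reduces to it together with Proposition~\ref{prop:algebraic degree}~(i).

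For the divisibility in (ii), I will use the identity $GD_d(x+1) = GD_d(x)$. It holds because $j \mapsto j+1$ permutes $\mathbb{F}_p$. Consequently $\phi_d(x+c,y) = \phi_d(x,y)$ for every $c \in \mathbb{F}_p$. Since $\phi_d(y,y) = 0$, we get $\phi_d(y+c,y) = 0$ for each $c \in \mathbb{F}_p$. Viewing $\phi_d$ as a polynomial in $x$ over $\mathbb{F}_p[y]$, it is therefore divisible by $x - y - c$ for each $c$. These linear factors are pairwise coprime, so $\phi_d$ is divisible by $\prod_{c \in \mathbb{F}_p}(x-y-c) = \wp(x-y)$.

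For the Frobenius relation, write $d = p^\nu d'$ and use $(x+j)^{p^\nu d'} = ((x+j)^{d'})^{p^\nu}$. The map $u \mapsto u^{p^\nu}$ is additive in characteristic $p$, and $j^{p^\nu} = j$ for $j \in \mathbb{F}_p$. Hence
\begin{align*}
GD_d(x) = \Bigl(\sum_{j \in \mathbb{F}_p} (x+j)^{d'}\Bigr)^{p^\nu} = (GD_{d'}(x))^{p^\nu},
\end{align*}
and so $\phi_d = \phi_{d'}^{p^\nu}$. Combining this with the divisibility, applied to $d'$, gives $\psi_d = \bigl(\phi_{d'}/\wp(x-y)\bigr)^{p^\nu} = \psi_{d'}^{p^\nu}$. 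This is a polynomial in $\mathbb{F}_p[x,y]$, because $\phi_{d'}/\wp(x-y)$ is one by the divisibility already shown.

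For (i), the equivalence of $\phi_d = 0$ and $\psi_d = 0$ is immediate from the definition, since $\psi_d$ is a quotient of $\phi_d$ by a nonzero polynomial. It remains to show that $\phi_d = 0$ exactly when $GD_d$ is constant, and then apply Proposition~\ref{prop:algebraic degree}~(i).
\begin{itemize}
\item If $GD_d$ is constant, then clearly $\phi_d = GD_d(x) - GD_d(y) = 0$.
\item Conversely, if $\phi_d = 0$, substituting $y = 0$ gives $GD_d(x) = GD_d(0)$, so $GD_d$ is constant.
\end{itemize}
Proposition~\ref{prop:algebraic degree}~(i) states that $GD_d \in \mathbb{F}_p$ if and only if $w_p(d) < p$. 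Its proof of $GD_d \notin \mathbb{F}_p$ when $w_p(d) \ge p$ only exhibits a nonzero term with $k \neq d$, so it does not use the hypothesis $n \ge 2$. This gives the equivalence with $w_p(d) < p$.

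I expect no real obstacle here. The only point needing care is the coprimality of the factors $x - y - c$, which holds because they are distinct monic linear polynomials in $x$ over the domain $\mathbb{F}_p[y]$.
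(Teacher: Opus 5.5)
Your proof is correct and follows essentially the same route as the paper: you get the divisibility of $\phi_d$ by $\wp(x-y)$ from $\phi_d(y+j,y)=0$ for $j \in \mathbb{F}_p$, the relation $\phi_d = \phi_{d'}^{p^\nu}$ from the Frobenius identity $GD_d = (GD_{d'})^{p^\nu}$, and part (i) from Proposition~\ref{prop:algebraic degree}~(i). You also spell out steps the paper leaves implicit: that $\phi_d=0$ exactly when $GD_d$ is constant, that $\psi_d=0$ exactly when $\phi_d=0$, and that the hypothesis $n\geq 2$ plays no role in that proposition's part (i).
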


\begin{proof}
\begin{enumerate}[(i)]
\item 
It follows from Proposition \ref{prop:algebraic degree} (i). 

\item 
Since $\phi_{d} (y+j,y) = GD_{d}(y+j) - GD_{d}(y) = 0$ for any $j \in \mathbb{F}_{p}$, the polynomial $\phi_{d}$ is divisible by $\prod_{j \in \mathbb{F}_p} (x - y - j) = (x-y)^p - (x-y) = \wp(x-y)$. 
Since $GD_{d} (x) = (GD_{d'}(x))^{p^\nu}$, the equality $\phi_d(x,y) = (\phi_{d'}(x,y))^{p^\nu}$ holds. 
\qedhere
\end{enumerate}
\end{proof}

Throughout this paper, two polynomials are said to be relatively prime if they have no common non-constant factor. 
In particular, if either of two relatively prime polynomials is zero, then the other must be a non-zero constant. 

\begin{lemma} \label{lem: def of psid}
The notation is as above. 
Assume that $\phi_d \ne 0$. 
\begin{enumerate}[(i)]
\item 
The polynomials $\phi_{d'}$ and $\psi_{d'}$ are reduced over $\overline{\mathbb{F}_p}$. 

\item 
The polynomials $\psi_d$ and $\wp(x-y)$ are relatively prime. 
\end{enumerate}
\end{lemma}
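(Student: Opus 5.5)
The plan is to reduce everything to the squarefreeness of $\phi_{d'}$ and to obtain that from a derivative argument. Set $G \coloneqq GD_{d'}$, so that $\phi_{d'}(x,y) = G(x) - G(y)$. Since $\phi_d = (\phi_{d'})^{p^\nu}$ by Lemma~\ref{lem: psid = 0}~(ii), the assumption $\phi_d \ne 0$ gives $\phi_{d'} \ne 0$. Then Lemma~\ref{lem: psid = 0}~(i) gives $w_p(d') = w_p(d) \geq p$; here we use that $w_p$ is invariant under multiplication by $p$.

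For (i), suppose a non-constant $h \in \overline{\mathbb{F}_p}[x,y]$ satisfies $h^2 \mid \phi_{d'}$. Then $h$ divides both partial derivatives
\[
\partial_x \phi_{d'} = G'(x) \quad \text{and} \quad \partial_y \phi_{d'} = -G'(y).
\]
We compute
\[
G'(x) = d' \sum_{j \in \mathbb{F}_p} (x+j)^{d'-1} = d' \, GD_{d'-1}(x).
\]
Because $p \nmid d'$, we have $d' \ne 0$ in $\mathbb{F}_p$. Moreover $c_0(d') \geq 1$, so $w_p(d'-1) = w_p(d') - 1 \geq p-1$. By Proposition~\ref{prop:algebraic degree}~(i), $GD_{d'-1}$ is then either $-1$ or non-constant, and in both cases it is non-zero. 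Hence $G'(x)$ is a non-zero polynomial in $x$ alone and $G'(y)$ is a non-zero polynomial in $y$ alone. A common divisor of these two polynomials must be constant, because a divisor of a non-zero one-variable polynomial in $x$ involves only $x$, and similarly for $y$. This contradicts the choice of $h$, so $\phi_{d'}$ is reduced over $\overline{\mathbb{F}_p}$. Since $\psi_{d'}$ is a non-zero divisor of $\phi_{d'}$ (Lemma~\ref{lem: psid = 0}~(ii)), it is reduced as well.

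For (ii), write $\phi_{d'} = \wp(x-y)\,\psi_{d'}$ and use $\wp(x-y) = \prod_{j \in \mathbb{F}_p}(x-y-j)$, which factors into distinct linear factors over $\mathbb{F}_p$. If some factor $x-y-j$ divided $\psi_{d'}$, then $(x-y-j)^2$ would divide $\phi_{d'}$, contradicting (i). So $\psi_{d'}$ and $\wp(x-y)$ are relatively prime. Now $\psi_d = \psi_{d'}^{p^\nu}$, and taking a $p^\nu$-th power does not change the set of irreducible factors. Hence $\psi_d$ and $\wp(x-y)$ are also relatively prime.

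The only step needing care is the non-vanishing of $G'$. It depends on $p \nmid d'$ in two ways: it makes the scalar $d'$ non-zero, and it ensures that subtracting $1$ from $d'$ lowers the $p$-weight by exactly one, so that $w_p(d'-1) \geq p-1$.
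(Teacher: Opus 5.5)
Your proof is correct and follows essentially the same route as the paper. You show that $\phi_{d'}$ is squarefree because its partial derivatives $\pm d'\,GD_{d'-1}$ are non-zero one-variable polynomials, using Proposition~\ref{prop:algebraic degree}~(i) with $p \nmid d'$ and $w_p(d'-1) \geq p-1$, and you then get coprimality of $\psi_d$ and $\wp(x-y)$ from $\phi_{d'} = \wp(x-y)\psi_{d'}$ together with $\psi_d = \psi_{d'}^{p^\nu}$, your explicit check on the linear factors $x-y-j$ merely spelling out a step the paper leaves implicit.
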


\begin{proof}
\begin{enumerate}[(i)]
\item 
It suffices to show that $\phi_{d'}$ is reduced over $\overline{\mathbb{F}_p}$ since $\phi_{d'} (x,y) = \wp(x-y) \psi_{d'}(x,y)$. 
Assume that there exists a non-constant polynomial $g \in \overline{\mathbb{F}_p} [x,y]$ such that $g^2 \mid \phi_{d'}$. 
Then the relations $g \mid \frac{\partial \phi_{d'}}{\partial x} = d' GD_{d'-1}(x)$ and $g \mid \frac{\partial \phi_{d'}}{\partial y} = -d' GD_{d'-1}(y)$ hold. 
Since $\phi_d \ne 0$, Lemma~\ref{lem: psid = 0} (i) shows that $w_p(d') = w_p(d) \geq p$. 
Proposition \ref{prop:algebraic degree} (i) shows that $d'GD_{d'-1}(x) \ne 0$ since $d' \not \equiv 0 \mod p$ and $w_p(d'-1) = w_p(d') - 1 \geq p-1$. 
Thus, the relations $g \in \overline{\mathbb{F}_p}[x] \cap \overline{\mathbb{F}_p}[y] = \overline{\mathbb{F}_p}$ hold, which is absurd. 

\item 
Lemma \ref{lem: def of psid} (i) shows that $\phi_{d'}$ is reduced over $\overline{\mathbb{F}_{p}}$, which implies that $\psi_{d'}$ and $\wp(x-y)$ are relatively prime since $\phi_{d'} (x,y) = \wp(x-y) \psi_{d'}(x,y)$. 
Thus, Lemma~\ref{lem: psid = 0} (ii) shows the assertion. 
\qedhere
\end{enumerate}
\end{proof}

For a subfield $K$ of $\overline{\mathbb{F}_p}$, we denote by $V(g)$ the closed subscheme of $\mathrm{Spec}\, K[x,y]$ defined by $g \in K[x,y]$ and denote by $V(g) (L)$ the set of $L$-rational points of $V(g)$, where $L \subseteq \overline{\mathbb{F}_p}$ is an extension field of $K$. 
Then the equality 
\begin{align} \label{eq: V(g)(L)}
V(g)(L) = \Set{ (\alpha, \beta) \in \mathbb{A}^2(L) | g(\alpha, \beta) = 0 } 
\end{align}
holds. 
More generally, for each $g \in \overline{\mathbb{F}_p}[x,y]$ and each subfield $L \subseteq \overline{\mathbb{F}_p}$, let $V(g)(L)$  denote the right hand side of the equality (\ref{eq: V(g)(L)}), even when $g \notin L[x,y]$. 
Throughout this paper, unless otherwise specified, a point $P$ of the scheme $V(g)$ over $K$ is always understood to be an $\overline{\mathbb{F}_p}$-rational point, that is, $P \in V(g)(\overline{\mathbb{F}_p})$.

Then a geometric characterization of monomial GAPN functions is given as follows. 

\begin{proposition} \label{prop:geometric characterization of GAPN functions} 
For any positive integer $n$, the following are equivalent: 
\begin{enumerate}[(i)]
\item 
$f_d(x) = x^d$ is a GAPN function on $\mathbb{F}_{p^n}$; 
\item 
$V(\phi_d) ( \mathbb{F}_{p^n} ) \subseteq 
\bigsqcup_{j \in \mathbb{F}_p} V(x - y + j) ( \mathbb{F}_{p^n} )$; 
\item 
$V(\psi_d) ( \mathbb{F}_{p^n} ) \subseteq 
\bigsqcup_{j \in \mathbb{F}_p} V(x - y + j) ( \mathbb{F}_{p^n} )$. 
\end{enumerate}
\end{proposition}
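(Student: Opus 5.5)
The plan is to prove (i)$\Leftrightarrow$(ii) directly from Lemma~\ref{lem: monomial GAPN}, and then (ii)$\Leftrightarrow$(iii) from the factorization $\phi_d = (\wp(x-y))^{p^\nu}\psi_d$ given by Definition~\ref{def: psid} and Lemma~\ref{lem: psid = 0}~(ii). First note that the union on the right-hand side is indeed disjoint. Distinct $j \in \mathbb{F}_p$ give disjoint lines, and $\bigsqcup_{j} V(x-y+j)(\mathbb{F}_{p^n}) = \{(\alpha,\beta) \in \mathbb{A}^2(\mathbb{F}_{p^n}) \mid \alpha - \beta \in \mathbb{F}_p\}$. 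The roots of $\wp(z) = z^p - z$ are exactly $\mathbb{F}_p$, so this set also equals $V(\wp(x-y))(\mathbb{F}_{p^n})$.

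For (i)$\Leftrightarrow$(ii), observe that $V(\phi_d)(\mathbb{F}_{p^n})$ is precisely the set of pairs $(\alpha,\beta) \in \mathbb{F}_{p^n}^2$ with $GD_d(\alpha) = GD_d(\beta)$. Condition (ii) therefore says that every such pair satisfies $\alpha - \beta \in \mathbb{F}_p$. By Lemma~\ref{lem: monomial GAPN}, this is equivalent to (i).

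For (ii)$\Leftrightarrow$(iii), write $W \coloneqq V(\wp(x-y))(\mathbb{F}_{p^n})$. Because $\phi_d = (\wp(x-y))^{p^\nu}\psi_d$ as polynomials, a point of $\mathbb{A}^2(\mathbb{F}_{p^n})$ is a zero of $\phi_d$ if and only if it is a zero of $\wp(x-y)$ or of $\psi_d$. Hence
\begin{align*}
V(\phi_d)(\mathbb{F}_{p^n}) = W \cup V(\psi_d)(\mathbb{F}_{p^n}).
\end{align*}
Since $W$ is contained in the right-hand side of (ii) and (iii), $V(\phi_d)(\mathbb{F}_{p^n}) \subseteq W$ holds if and only if $V(\psi_d)(\mathbb{F}_{p^n}) \subseteq W$.

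The degenerate case $\phi_d = 0$ (equivalently $\psi_d = 0$, by Lemma~\ref{lem: psid = 0}~(i)) needs a quick separate check. Here both (ii) and (iii) fail whenever $n \geq 2$, and (i) also fails by Proposition~\ref{prop:algebraic degree}~(ii). For $n = 1$, every point of $\mathbb{F}_p^2$ satisfies $\alpha - \beta \in \mathbb{F}_p$, so (i)--(iii) all hold. This case is already covered by the general argument, because the union identity remains valid when $\psi_d = 0$. There is no real obstacle; the only thing to be careful about is that points are taken over $\mathbb{F}_{p^n}$ rather than over $\overline{\mathbb{F}_p}$.
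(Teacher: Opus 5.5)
Your proof is correct and follows the paper's own argument: the paper obtains (i)$\Leftrightarrow$(ii) from Lemma~\ref{lem: monomial GAPN} and (ii)$\Leftrightarrow$(iii) from the definition $\phi_d = (\wp(x-y))^{p^\nu}\psi_d$, exactly as you do. Your write-up just supplies the details the paper leaves implicit, namely that the disjoint union equals $V(\wp(x-y))(\mathbb{F}_{p^n})$ and that $V(\phi_d)(\mathbb{F}_{p^n}) = W \cup V(\psi_d)(\mathbb{F}_{p^n})$; as you note yourself, the separate check of the case $\phi_d = 0$ is harmless but not needed.
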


\begin{proof}
The equivalence of the statements (i)--(ii) (resp.\ (ii)--(iii)) follows from Lemma~\ref{lem: monomial GAPN} 
(resp.\ Definition~\ref{def: psid}).
\end{proof}

We now prove Theorem~\ref{thm:geom. irred. comp. induces not exceptional}, which is our first main result. 
It follows from Lemma~\ref{lem: def of psid}, Proposition~\ref{prop:geometric characterization of GAPN functions} and Proposition~\ref{prop: generalization of key proposition} below.
For convenience, we restate the theorem. 
Proposition~\ref{prop: generalization of key proposition} and its proof are given after the proof of the restated theorem. 

\begin{theorem}[See Theorem~\ref{thm:geom. irred. comp. induces not exceptional}]
\label{thm: main1 restatement} 
Let $p$ be a prime number and $d$ be a positive integer. 
Then the monomial $f_d(x) = x^d$ is a $p$-exceptional GAPN function if and only if the polynomial $\psi_d$ is non-zero and has no absolutely irreducible factor defined over $\mathbb{F}_p$.
\end{theorem}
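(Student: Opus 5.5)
Both directions run through Proposition~\ref{prop:geometric characterization of GAPN functions}, which identifies the GAPN property on $\mathbb{F}_{p^n}$ with the inclusion $V(\psi_d)(\mathbb{F}_{p^n}) \subseteq \bigsqcup_{j} V(x-y+j)(\mathbb{F}_{p^n})$. The forthcoming Proposition~\ref{prop: generalization of key proposition} will supply the geometric input; I expect it to generalize Corollary~\ref{thm:geom. char. of red. poly.} by allowing a relatively prime auxiliary polynomial in place of ``rational points in $\mathbb{A}^2(\mathbb{F}_q)$''. The relevant auxiliary polynomial here is $\wp(x-y)$, and Lemma~\ref{lem: def of psid}~(ii) guarantees that it is relatively prime to $\psi_d$.

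First I dispose of the case $\psi_d = 0$. By Lemma~\ref{lem: psid = 0}~(i) this means $w_p(d) < p$. Proposition~\ref{prop:algebraic degree}~(ii) then says $f_d$ is not GAPN on $\mathbb{F}_{p^n}$ for any $n \ge 2$, so $f_d$ is not $p$-exceptional, and both sides of the equivalence are false. From now on assume $\psi_d \ne 0$, and put $h = \psi_d$ and $g = \wp(x-y)$. Then $h$ and $g$ are relatively prime, and the zero set of $g$ is exactly the union of the lines $x - y + j = 0$ for $j \in \mathbb{F}_p$.

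(If $h$ has an absolutely irreducible factor over $\mathbb{F}_p$, then $f_d$ is not exceptional.) Let $C$ be the curve cut out by an absolutely irreducible factor of $h$ defined over $\mathbb{F}_p$. By the Hasse--Weil (Lang--Weil) bound, $\#C(\mathbb{F}_{p^n}) \ge p^n - O(p^{n/2})$. Since $h$ and $g$ are coprime, $C$ meets $V(g)$ in at most $\deg h \cdot p$ points by Bézout. Hence for all large $n$ there is a point in $V(\psi_d)(\mathbb{F}_{p^n}) \setminus V(g)$, so $f_d$ is GAPN on $\mathbb{F}_{p^n}$ for only finitely many $n$.

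(If $h$ has no such factor, then $f_d$ is exceptional.) Write $h = \prod h_k$ as a product of $\mathbb{F}_p$-irreducible factors; each $h_k$ fails to be absolutely irreducible. Over $\mathbb{F}_{p^{m_k}}$ with $m_k > 1$, the factor $h_k$ splits into Galois-conjugate factors $h_k^{\sigma}$ that are pairwise distinct. Take $n$ coprime to every $m_k$; there are infinitely many such $n$. Then $\mathbb{F}_{p^n} \cap \mathbb{F}_{p^{m_k}} = \mathbb{F}_p$, so Frobenius still permutes the conjugates nontrivially. An $\mathbb{F}_{p^n}$-point on one conjugate is therefore also a point on the Frobenius-conjugate, which is a different curve. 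Consequently every $\mathbb{F}_{p^n}$-point lies in the intersection of two distinct conjugates, which is a finite set, and each point of that set is fixed by the Frobenius for $p^{m_k}$ and for $p^n$, hence is $\mathbb{F}_p$-rational. Thus $V(h)(\mathbb{F}_{p^n}) \subseteq \mathbb{A}^2(\mathbb{F}_p)$.

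Every $\mathbb{F}_p$-rational point $(\alpha,\beta)$ satisfies $\alpha - \beta \in \mathbb{F}_p$, so it lies on some line $x-y+j$. Proposition~\ref{prop:geometric characterization of GAPN functions} then gives the GAPN property on $\mathbb{F}_{p^n}$ for infinitely many $n$. The main obstacle is making this last step rigorous: precisely, showing that the fields of definition of the absolutely irreducible components interact with $\mathbb{F}_{p^n}$ so that rational points are forced into $\mathbb{F}_p$. I expect this to be exactly the content of Proposition~\ref{prop: generalization of key proposition}, and I would invoke it directly for $(h,g)$.
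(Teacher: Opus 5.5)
Your handling of $\psi_d=0$ and of the direction ``absolutely irreducible factor over $\mathbb{F}_p$ $\Rightarrow$ not $p$-exceptional'' (Hasse--Weil against B\'ezout with $\deg\wp(x-y)=p$) is correct and matches the paper's Proposition~\ref{prop: generalization of key proposition}~(c). Your observation that an $\mathbb{F}_{p^n}$-point of a component $V(g')$ also lies on $V(\sigma^n g')\neq V(g')$ when $m_k\nmid n$ is also sound. It replaces the paper's appeal to Lemma~\ref{lem: T subset S} and shows that $V(h)(\mathbb{F}_{p^n})$ consists of singular points of $V(h)_{\mathrm{red}}$.

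The gap is the last step. It is false that every point of $V(g')\cap V(\sigma^n g')$ is fixed by the $p^{m_k}$-power Frobenius. That Frobenius fixes $g'$ and $\sigma^n g'$, so it only stabilizes this finite set and may permute its points. For instance, take $p$ odd, $c\in\mathbb{F}_p^\times$ a non-square and $B\in\mathbb{F}_p[x]$ an irreducible cubic, and put $h=y^2-cB(x)^2$. Then $h$ has no absolutely irreducible factor over $\mathbb{F}_p$, and its components $y\mp\sqrt{c}\,B(x)$ have field of definition $\mathbb{F}_{p^2}$. Yet for $n=3$, which is coprime to $2$, the point $(\theta,0)$ with $B(\theta)=0$ lies in $V(h)(\mathbb{F}_{p^3})\setminus\mathbb{A}^2(\mathbb{F}_p)$. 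So coprimality of $n$ with the $m_k$ alone does not give $V(h)(\mathbb{F}_{p^n})\subseteq\mathbb{A}^2(\mathbb{F}_p)$. Moreover, a singular point that is not $\mathbb{F}_p$-rational need not lie on any line $x-y+j=0$, so the GAPN conclusion does not follow.

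The missing ingredient is the field of definition of the singular locus. The finitely many singular points of $V(h)_{\mathrm{red}}$ all lie in $\mathbb{A}^2(\mathbb{F}_{p^N})$ with $N=N(h)$. You must additionally require $\gcd(n,N)=1$, so that $\mathbb{F}_{p^n}\cap\mathbb{F}_{p^N}=\mathbb{F}_p$. Infinitely many such $n$ remain, e.g.\ all $n\equiv 1\pmod{N\prod_k m_k}$. This is exactly the set $T'$ in the paper's Lemma~\ref{lem:key} (with $t=r=1$). With this extra condition your argument becomes the paper's proof via Proposition~\ref{prop: generalization of key proposition}~(a)--(b) with $s=t=1$ and $f=\wp(x-y)$.
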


\begin{proof}
Lemma~\ref{lem: def of psid} (ii) implies that Proposition~\ref{prop: generalization of key proposition} below is applicable with $s = t = 1$, $f(x,y) = \wp(x-y)$ and $h = \psi_d$. 
Thus, Proposition~\ref{prop:geometric characterization of GAPN functions} and Proposition~\ref{prop: generalization of key proposition} below show that the following are equivalent: 
$f_d(x) = x^d$ is a $p$-exceptional GAPN function; 
there exist infinitely many positive integers $n$ such that $V(\psi_d) (\mathbb{F}_{p^n}) \subseteq \bigsqcup_{j \in \mathbb{F}_p} V(x - y + j) (\mathbb{F}_{p^n})$; 
$\psi_d$ is non-zero and has no absolutely irreducible factor defined over $\mathbb{F}_p$. 
\end{proof}

For each subfield $L \subseteq \overline{\mathbb{F}_p}$ and each polynomial $h \in L[x,y]$, let $V(h)_{\mathrm{red}}$ denote the reduction of the scheme $V(h)$ over $L$, and let $\mathrm{Sing}(V(h)_{\mathrm{red}})(L)$ denote the set of $L$-rational points of the singular
locus $\mathrm{Sing} (V(h)_{\mathrm{red}})$ of $V(h)_{\mathrm{red}}$. 
Put 
\begin{align*}
N(h) \coloneqq \min \Set{ n \in \mathbb{Z}_{>0} | \mathrm{Sing} (V(h)_{\mathrm{red}})(\overline{\mathbb{F}_p}) \subseteq V(h)(\mathbb{F}_{p^n}) }. 
\end{align*}

\begin{proposition} \label{prop: generalization of key proposition}
Let $s$ and $t$ be positive integers. 
Put $q \coloneqq p^t$. 
Let $f \in \overline{\mathbb{F}_p}[x,y]$ and $h \in \mathbb{F}_q[x,y]$. 
Put $r \coloneqq \gcd(t,N(h))$. 
Consider the following three conditions: 
\begin{enumerate}[(i)]
\item 
the polynomial $h$ is non-zero and has no absolutely irreducible factor defined over $\mathbb{F}_q$; 
\item 
there exist infinitely many positive integers $n$ such that $V(h) (\mathbb{F}_{q^n}) \subseteq \mathbb{A}^2(\mathbb{F}_{p^s})$; 
\item 
there exist infinitely many positive integers $n$ such that $V(h) (\mathbb{F}_{q^n}) \subseteq V(f) (\mathbb{F}_{q^n})$. 
\end{enumerate}
Then the following statements hold: 
\begin{enumerate}[(a)]
\item 
If $r \mid s$, then the condition (i) implies the condition (ii);
\item 
If $V(f) (\mathbb{F}_{p^s}) = \mathbb{A}^2 (\mathbb{F}_{p^s})$, then the condition (ii) implies the condition (iii);
\item 
If the polynomials $h$ and $f$ are relatively prime, then the condition (iii) implies the condition (i). 
\item 
The condition (ii) implies the condition (i).
\end{enumerate}
\end{proposition}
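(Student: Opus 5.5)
For (b) I would argue directly from the definitions. Assume $V(f)(\mathbb{F}_{p^s}) = \mathbb{A}^2(\mathbb{F}_{p^s})$ and let $n$ be one of the infinitely many integers in (ii). Any $P \in V(h)(\mathbb{F}_{q^n})$ lies in $\mathbb{A}^2(\mathbb{F}_{p^s})$, so $f(P)=0$. Since $P$ has coordinates in $\mathbb{F}_{q^n}$, we get $P \in V(f)(\mathbb{F}_{q^n})$. The same $n$ therefore witness (iii).

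For (c) and (d) I would prove the contrapositives using the Lang--Weil bound. Suppose (i) fails. If $h=0$, then $V(h)(\mathbb{F}_{q^n}) = \mathbb{A}^2(\mathbb{F}_{q^n})$, which has $q^{2n}$ points. This is not contained in the finite set $\mathbb{A}^2(\mathbb{F}_{p^s})$ once $q^{2n} > p^{2s}$. For (c), relative primality with $h=0$ forces $f$ to be a non-zero constant, so $V(f)$ is empty. Otherwise $h$ has an absolutely irreducible factor $g \in \mathbb{F}_q[x,y]$. By Lang--Weil, $\# V(g)(\mathbb{F}_{q^n}) \geq q^n - C q^{n/2}$ with $C$ depending only on $\deg g$, so this count tends to infinity. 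Since $\mathbb{A}^2(\mathbb{F}_{p^s})$ is finite, (ii) fails for all large $n$, which proves (d). For (c), $g$ divides $h$, so $g$ and $f$ are relatively prime in $\overline{\mathbb{F}_p}[x,y]$. By B\'ezout, $V(g)\cap V(f)$ has at most $\deg f\cdot\deg g$ points. Hence $V(h)(\mathbb{F}_{q^n}) \not\subseteq V(f)(\mathbb{F}_{q^n})$ for all large $n$, and (iii) fails.

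The main step is (a). Assume (i) and write $h = c\prod_k g_k^{e_k}$ with $g_k \in \mathbb{F}_q[x,y]$ irreducible over $\mathbb{F}_q$ and none absolutely irreducible. Since $\mathbb{F}_q$ is perfect, each $g_k$ factors over $\overline{\mathbb{F}_p}$ as a product of $m_k \geq 2$ pairwise distinct conjugates $\sigma^{l}(g_{k,1})$, $0 \le l < m_k$, where $\sigma$ is the $q$-Frobenius acting on coefficients. The distinct geometric components of $V(h)_{\mathrm{red}}$ are the curves $V(g_{k,l})$, and $\sigma$ permutes each family cyclically with orbit size $m_k$.

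I would choose $n$ with $\gcd(n, N(h)\prod_k m_k)=1$; there are infinitely many such $n$, for instance large primes. Let $P \in V(h)(\mathbb{F}_{q^n})$ lie on some component $V(g_{k,l})$. Because $P$ is fixed by $\sigma^n$, it also lies on $V(\sigma^n(g_{k,l}))$. Since $m_k \nmid n$, this is a different geometric component. So $P$ lies on two distinct components and is a singular point of $V(h)_{\mathrm{red}}$.

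By the definition of $N(h)$, every such $P$ has coordinates in $\mathbb{F}_{p^{N(h)}}$. It also has coordinates in $\mathbb{F}_{q^n} = \mathbb{F}_{p^{tn}}$, so $P \in \mathbb{A}^2(\mathbb{F}_{p^{\gcd(tn,N(h))}})$. Since $\gcd(n,N(h))=1$, we have $\gcd(tn,N(h))=\gcd(t,N(h))=r$. As $r \mid s$, this gives $P \in \mathbb{A}^2(\mathbb{F}_{p^s})$, which is (ii).

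The delicate points are checking that the conjugates are genuinely distinct and that the Frobenius orbit sizes are exactly the $m_k$. Both follow from the irreducibility of $g_k$ over $\mathbb{F}_q$ together with perfectness of $\mathbb{F}_q$.
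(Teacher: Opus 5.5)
Your proposal is correct and follows the paper's proof. Part (b) is direct from the hypothesis on $V(f)$. Parts (c) and (d) use the Hasse--Weil lower bound for an absolutely irreducible factor $g$ defined over $\mathbb{F}_q$, together with B\'ezout or the finiteness of $\mathbb{A}^2(\mathbb{F}_{p^s})$. Part (a) chooses infinitely many $n$ that avoid the fields of definition of the geometric components and are coprime to $N(h)$, so that every $\mathbb{F}_{q^n}$-point is singular and therefore lies in $\mathbb{A}^2(\mathbb{F}_{p^r})$.

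The only real difference is how you get singularity in (a). You argue directly that the point lies on both $V(g_{k,l})$ and its distinct conjugate $V(\sigma^n(g_{k,l}))$, whereas the paper cites the Stacks lemma that a variety with a smooth rational point is geometrically integral. Your condition $\gcd(n, N(h)\prod_k m_k)=1$ is slightly stronger than the paper's ($m_{i,j}\nmid n$ and $\gcd(n,N(h)/r)=1$), but it still gives infinitely many $n$.
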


\begin{proof}
In the case $h \in \mathbb{F}_q$, 
the assertions are clear. 
Thus, we consider the case $h \not \in \mathbb{F}_q$.

\begin{enumerate}[(a)]
\setlength{\parindent}{10pt} 
\setlength{\parskip}{0pt}
\setlength{\parsep}{0.5\baselineskip}

\item 
Assume that $r \mid s$. 
Then Lemma~\ref{lem:key} below shows that the condition (i) implies the condition (ii). 

\item 
Assume that $V(f) (\mathbb{F}_{p^s}) = \mathbb{A}^2 (\mathbb{F}_{p^s})$. 
Let $n$ be a positive integer. 
Then the relations 
\begin{align*}
\mathbb{A}^2(\mathbb{F}_{p^s}) \cap \mathbb{A}^2(\mathbb{F}_{q^n}) 
= V(f) (\mathbb{F}_{p^s}) \cap \mathbb{A}^2(\mathbb{F}_{q^n}) 
\subseteq V(f) (\mathbb{F}_{q^n})
\end{align*}
show that the condition (ii) implies the condition (iii). 

\item 
Assume that the polynomials $h$ and $f$ are relatively prime and that the condition (iii) holds. 
Let us prove that the condition (i) holds. 
Assume that $h$ has an absolutely irreducible factor $g$ defined over $\mathbb{F}_q$. 
Since $g$ is an absolutely irreducible polynomial in $\mathbb{F}_{q^n}[x,y]$ for any positive integer $n$, Theorem~\ref{thm:lower bound of rational points} below implies that $q^{n/2} \leq \# V(g) (\mathbb{F}_{q^n})$ for any sufficiently large $n$. 
The condition (iii) shows that there exists a positive integer $m$ such that 
\begin{gather*}
V(g) \left( \mathbb{F}_{q^{m}} \right) \subseteq 
V(h) \left( \mathbb{F}_{q^{m}} \right) \subseteq 
V(f) \left( \mathbb{F}_{q^{m}} \right) \quad \mbox{and} \quad 
\deg f \deg g < q^{\frac{m}{2}} \leq \# V(g) \left( \mathbb{F}_{q^{m}} \right). 
\end{gather*}
Lemma~\ref{lem:upper bound of Fpn-rational points} below shows that $\# V(g) \left( \mathbb{F}_{q^{m}} \right) \leq \deg f \deg g $, which is absurd. 

\item 
Assume that $h$ has an absolutely irreducible factor $g$ defined over $\mathbb{F}_q$. 
Similarly to the proof of Proposition~\ref{prop: generalization of key proposition} (c), Theorem~\ref{thm:lower bound of rational points} below shows that $q^{n/2} \leq \# V(g) (\mathbb{F}_{q^n}) \leq \# V(h) (\mathbb{F}_{q^n})$ for any sufficiently large $n$. 
This contradicts the condition (ii). 
\qedhere
\end{enumerate}
\end{proof}

\begin{lemma} \label{lem:key}
Let $t$ be a positive integer. 
Put $q \coloneqq p^t$. 
Let $h \in \mathbb{F}_q[x,y] \setminus \{ 0 \}$. 
Put $r \coloneqq \gcd(t, N(h))$. 
Assume that the polynomial $h$ has no absolutely irreducible factor defined over $\mathbb{F}_q$. 
Then there exist infinitely many positive integers $n$ such that $V(h) (\mathbb{F}_{q^n}) = \mathrm{Sing}(V(h)_{\mathrm{red}})(\mathbb{F}_{q^n}) \subseteq \mathbb{A}^2( \mathbb{F}_{p^r} )$. 
\end{lemma}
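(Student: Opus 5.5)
We will study the irreducible components of $V(h)_{\mathrm{red}}$ over $\mathbb{F}_q$ and show that their $\mathbb{F}_{q^n}$-rational points lie in the singular locus, for suitable $n$.

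First, factor $h$ into irreducible factors $h_1,\dots,h_k$ in $\mathbb{F}_q[x,y]$. By assumption none of them is absolutely irreducible. Each $h_i$ therefore factors over $\overline{\mathbb{F}_p}$ as a product of Galois conjugates. Write $h_i = c\prod_{\sigma} \sigma(g_i)^{e}$, where $g_i$ is absolutely irreducible and defined over $\mathbb{F}_{q^{m_i}}$ with $m_i\ge 2$ minimal. The conjugates $\sigma(g_i)$ for $\sigma\in\mathrm{Gal}(\mathbb{F}_{q^{m_i}}/\mathbb{F}_q)$ are pairwise non-associate. This is the standard fact that an $\mathbb{F}_q$-irreducible polynomial is, up to a power, the norm of an absolutely irreducible one. (In characteristic $p$ a power $e$ may appear only via inseparability; since we work with $V(h)_{\mathrm{red}}$, we may replace $h$ by its radical over $\overline{\mathbb{F}_p}$.)

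Put $M=\mathrm{lcm}(m_1,\dots,m_k)$ and choose $n$ with $\gcd(n,M)=1$; there are infinitely many such $n$. Then for each $i$ the field $\mathbb{F}_{q^n}$ does not contain the field of definition of $g_i$. Let $P\in V(h_i)(\mathbb{F}_{q^n})$, so $g_i(P)=0$ after choosing a conjugate. The Frobenius $F$ of $\mathbb{F}_{q^n}$ fixes $P$ and maps $g_i$ to $g_i^{F}=\tau(g_i)$, where $\tau$ is the $q^n$-power map restricted to $\mathbb{F}_{q^{m_i}}$. Since $\gcd(n,m_i)=1$, $\tau$ generates $\mathrm{Gal}(\mathbb{F}_{q^{m_i}}/\mathbb{F}_q)$, so $\tau(g_i)\neq g_i$ up to scalars. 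Hence $P$ lies on two distinct geometric components $V(g_i)$ and $V(\tau(g_i))$ of $V(h)_{\mathrm{red}}$, and is therefore a singular point of $V(h)_{\mathrm{red}}$. This gives $V(h)(\mathbb{F}_{q^n})=\mathrm{Sing}(V(h)_{\mathrm{red}})(\mathbb{F}_{q^n})$, the reverse inclusion being trivial.

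For the second inclusion, recall that $\mathrm{Sing}(V(h)_{\mathrm{red}})$ is a finite set, because $V(h)_{\mathrm{red}}$ is a reduced curve. By definition of $N=N(h)$, all of its points are $\mathbb{F}_{p^N}$-rational. A point in $\mathrm{Sing}(\mathbb{F}_{q^n})=\mathrm{Sing}\cap\mathbb{A}^2(\mathbb{F}_{p^{tn}})$ therefore lies in $\mathbb{A}^2(\mathbb{F}_{p^{\gcd(tn,N)}})$. We then additionally impose $\gcd(n,N)=1$, so that $\gcd(tn,N)=\gcd(t,N)=r$. Both conditions together read $\gcd(n,MN)=1$, which still allows infinitely many $n$, for instance primes not dividing $MN$.

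The main obstacle is the first step, namely making rigorous that a point fixed by Frobenius on a non-absolutely-irreducible $\mathbb{F}_q$-component lies on two distinct geometric components. This requires the Galois-orbit description of geometric components of $V(h_i)_{\mathrm{red}}$, including the inseparability subtleties in characteristic $p$. We also need the fact that a point on two distinct components of a reduced plane curve is singular, which follows from the Jacobian criterion because the local equation is a product of two non-units. We would handle this by working with the radical of $h$ in $\overline{\mathbb{F}_p}[x,y]$, which is defined over $\mathbb{F}_q$ since $\mathbb{F}_q$ is perfect.
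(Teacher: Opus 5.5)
Your proof is correct. Its skeleton matches the paper's: restrict to $n$ for which no absolutely irreducible factor of $h$ is defined over $\mathbb{F}_{q^n}$, show that every $\mathbb{F}_{q^n}$-point of $V(h)$ is singular, and conclude with $\mathbb{F}_{p^{tn}} \cap \mathbb{F}_{p^N} = \mathbb{F}_{p^r}$.

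The real difference is how singularity is proved.
\begin{itemize}
\item The paper takes an $\mathbb{F}_{q^n}$-irreducible component $V$ of $V(g_i)$ through $P$. It notes that $V$ is not geometrically integral and cites the general criterion (Stacks 0CDW) that a variety with a smooth rational point is geometrically integral. So $P$ is singular on $V$, and the paper then asserts without further comment that $P$ is singular on $V(h)_{\mathrm{red}}$.
\item You instead unpack that criterion by hand for plane curves. The $q^n$-Frobenius fixes $P$ and sends the absolutely irreducible factor $g$ through $P$ to a non-associate factor $\tau(g)$ of $h$. Hence $P$ lies on two distinct geometric components, and the gradient of the radical of $h$ vanishes at $P$.
\end{itemize}
Your version is more elementary and self-contained. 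It also justifies directly the transfer of singularity to $V(h)_{\mathrm{red}}$, which the paper leaves implicit. The paper's route is shorter and rests on a general scheme-theoretic fact rather than a plane-curve computation.

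Two minor remarks. First, your condition $\gcd(n,MN)=1$ is stronger than the paper's ($m_{i,j}\nmid n$ and $\gcd(n,N/r)=1$), but it still leaves infinitely many $n$, so nothing is lost. Second, the inseparability issue you flag as the main obstacle does not arise. Since $\mathbb{F}_q$ is perfect, each $\mathbb{F}_q$-irreducible factor of $h$ is squarefree over $\overline{\mathbb{F}_p}$, so $e=1$. In any case, your argument only needs that the Frobenius permutes the absolutely irreducible factors of $h$, not the full Galois-orbit description.
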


\begin{proof}
Let $h = g_1 \cdots g_u$ be a prime factorization in $\mathbb{F}_q [x,y]$ and $g_i = g_{i, 1} \cdots g_{i, v_i}$ be a prime factorization in $\overline{\mathbb{F}_p} [x,y]$ for each $1 \leq i \leq u$. 
Put 
\begin{align*}
m_{i, j} \coloneqq \min \Set{ m \in \mathbb{Z}_{>0} | \mbox{$g_{i, j}$ is defined over $\mathbb{F}_{q^m}$} }
\end{align*}
for each $1 \leq i \leq u$ and each $1 \leq j \leq v_i$. 
Since $h$ has no absolutely irreducible factor defined over $\mathbb{F}_q$, the inequality $m_{i, j} > 1$ holds for any $1 \leq i \leq u$ and any $1 \leq j \leq v_i$. 
Then for any positive integer $n$ and any $1 \leq i \leq u$, the following are equivalent: 
(A) $g_i$ has no absolutely irreducible factor defined over $\mathbb{F}_{q^n}$; 
(B) $g_{i,j}$ is not defined over $\mathbb{F}_{q^n}$ for any $1 \leq j \leq v_i$; 
(C) the relation $m_{i, j} \nmid n$ holds for any $1 \leq j \leq v_i$. 
The equivalence of the statements (A)--(B) follows from the prime factorization $g_i = g_{i, 1} \cdots g_{i, v_i}$ in $\overline{\mathbb{F}_p}[x,y]$. 
The equivalence of the statements (B)--(C) follows from the minimality of $m_{i, j}$ since $\mathrm{Gal} (\mathbb{F}_{q^n} / \mathbb{F}_q) \cong \mathbb{Z} / n \mathbb{Z}$ for any positive integer $n$. 
Put 
\begin{align*}
T &\coloneqq \Set{ n \in \mathbb{Z}_{>0} | \mbox{$m_{i, j} \nmid n$ for any $1 \leq i \leq u$ and any $1 \leq j \leq v_i$} } \quad \mbox{and}\\
T_i &\coloneqq \Set{ 
n \in \mathbb{Z}_{>0} | 
\mbox{$g_i$ has no absolutely irreducible factor defined over $\mathbb{F}_{q^n}$} 
}  
\end{align*}
for each $1 \leq i \leq u$. 
The above equivalence of the statements (A)--(C) shows that $T = \bigcap_{1 \leq i \leq u} T_i$. 
Put 
\begin{align*}
N \coloneqq N(h) \quad \mbox{and} \quad 
T' \coloneqq \{ n \in T \mid \gcd(n, N/r) = 1 \}. 
\end{align*}
The equality $\# T' = \infty$ holds since the inequality $m_{i, j} > 1$ holds for any $1 \leq i \leq u$ and any $1 \leq j \leq v_i$. 
Thus, it suffices to show that 
\begin{align} \label{statment: key Prop}
\mbox{for any $n \in T'$, the relations $V(h) (\mathbb{F}_{q^n}) = \mathrm{Sing} (V(h)_{\mathrm{red}}) (\mathbb{F}_{q^n}) \subseteq \mathbb{A}^2( \mathbb{F}_{p^r} )$ hold.} 
\end{align}

Let us show the statement (\ref{statment: key Prop}). 
Let $n \in T'$. 
We have only to consider the case $V(h) (\mathbb{F}_{q^n}) \ne \emptyset$. 
Let $P = (\alpha, \beta) \in V(h) (\mathbb{F}_{q^n})$. 
Then $P \in V(g_i) ( \mathbb{F}_{q^n} )$ for some $1 \leq i \leq u$. 
Let $V$ be an irreducible component of $V(g_i)$ over $\mathbb{F}_{q^n}$ containing $P$. 
Then $V$ is a variety over $\mathbb{F}_{q^n}$ (see Definition~\ref{def: variety} for the definition of a variety over $K$). 
Since $n \in T_i$, Lemma \ref{lem: T subset S} below shows that $P$ is a singular point of $V$. 
Since the point $P$ is also a singular point of $V(h)_{\mathrm{red}}$, the relations $P \in \mathrm{Sing} (V(h)_{\mathrm{red}}) (\mathbb{F}_{q^n}) \subseteq V(h)(\mathbb{F}_{p^N})$ hold. 
Thus, the relations $\{ \alpha, \beta \} \subset \mathbb{F}_{q^n} \cap \mathbb{F}_{p^N} = \mathbb{F}_{p^{nt}} \cap \mathbb{F}_{p^N} = \mathbb{F}_{p^r}$ hold since $\gcd(n,N/r) = 1$ and $\gcd(t,N) = r$. 
Therefore, the statement (\ref{statment: key Prop}) holds. 
\end{proof}

\begin{definition}[See {\cite[Tag 020D]{stacks-project}}] \label{def: variety}
Let $K$ be a field. 
A \textit{variety over $K$} is a scheme $X$ over $K$ such that $X$ is integral and the structure morphism $X \to \mathrm{Spec}\, K$ is separated and of finite type. 
\end{definition}

\begin{lemma}[See {\cite[Tag 0CDW]{stacks-project}}] \label{lem: T subset S}
Let $K$ be a field. 
Let $X$ be a variety over $K$ that has a $K$-rational point at which $X$ is smooth. 
Then $X$ is geometrically integral over $K$, that is, $X$ is both geometrically reduced and geometrically irreducible over $K$. 
\end{lemma}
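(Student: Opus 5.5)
The plan is to deduce geometric integrality from the smooth $K$-rational point by working on the dense smooth open locus and then spreading out over all of $X$. Let $x \in X(K)$ be a point at which $X$ is smooth, and let $U \subseteq X$ be the smooth locus of $X \to \mathrm{Spec}\, K$. This locus is open and contains $x$. Since $X$ is irreducible, $U$ is dense in $X$, and the generic point $\eta$ of $X$ lies in $U$. Base change preserves smoothness, so $U_{\overline{K}} \subseteq X_{\overline{K}}$ is smooth over $\overline{K}$; in particular all its local rings are regular, hence domains. I will prove geometric irreducibility and geometric reducedness separately.

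\emph{Irreducibility.} The projection $\pi \colon X_{\overline{K}} \to X$ is integral, hence closed, and it is flat. Flatness means generizations lift along $\pi$, so every generic point of an irreducible component of $X_{\overline{K}}$ maps to $\eta$. Consequently every irreducible component $Z$ of $U_{\overline{K}}$ dominates $U$. Since $\pi$ is closed, $\pi(Z) = U \ni x$. Because $x$ is $K$-rational, its fibre $\pi^{-1}(x)$ is $\mathrm{Spec}(K \otimes_K \overline{K})$, a single point $\bar{x}$. Hence every irreducible component of $U_{\overline{K}}$ passes through $\bar{x}$. The local ring $\mathcal{O}_{U_{\overline{K}}, \bar{x}}$ is regular, hence a domain, so only one irreducible component can pass through $\bar{x}$; thus $U_{\overline{K}}$ is irreducible. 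Next, $U_{\overline{K}}$ is dense in $X_{\overline{K}}$: each component of $X_{\overline{K}}$ has generic point over $\eta \in U$, so it meets $U_{\overline{K}}$. Since $U_{\overline{K}}$ is dense and irreducible, $X_{\overline{K}}$ is irreducible.

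\emph{Reducedness.} Choose an affine open $V = \mathrm{Spec}\, A \subseteq X$. Because $X$ is integral, $A \hookrightarrow K(X)$ is injective, and it stays injective after tensoring with the field $\overline{K}$. So $A \otimes_K \overline{K} \hookrightarrow K(X) \otimes_K \overline{K}$. Now choose a nonempty affine open $W = \mathrm{Spec}\, B \subseteq U \cap V$. Then $K(X) \otimes_K \overline{K}$ is a localization of $B \otimes_K \overline{K}$, and the latter is reduced because $W_{\overline{K}}$ is smooth. Therefore $A \otimes_K \overline{K}$ is reduced. Since such $V$ cover $X$, $X_{\overline{K}}$ is reduced, and hence it is integral.

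The step I expect to be the main obstacle is the density and ``every component dominates'' claim, which needs flatness of $\pi$ (going down) together with closedness of integral morphisms. The reducedness step also uses flatness over a field to keep the inclusion $A \hookrightarrow K(X)$ injective after base change, so that no embedded components can appear outside the smooth locus. Both are standard facts; alternatively, one could simply cite the cited reference \cite[Tag 0CDW]{stacks-project} for the whole statement.
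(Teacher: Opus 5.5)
Your proof is correct. It is more self-contained than the paper, which gives no argument of its own and simply cites the Stacks Project (Tag 0CDW).

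The standard argument behind that reference runs as follows:
\begin{itemize}
\item pass to the smooth locus $U$, which is dense and scheme-theoretically dense, so it suffices to treat $U$;
\item get geometric reducedness from smoothness;
\item get geometric irreducibility by combining ``a connected scheme with a $K$-rational point is geometrically connected'' with the fact that $U_{\overline{K}}$ is smooth, hence normal, and so irreducible once connected.
\end{itemize}

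You replace that connectedness-plus-normality step with a direct topological argument:
\begin{itemize}
\item going-down for the flat map $\pi$ puts the generic point of every irreducible component of $X_{\overline{K}}$ over $\eta$;
\item closedness of the integral map $\pi$ then makes each component surject onto $X$, so it contains the unique point $\bar{x}$ over $x$;
\item the regular local ring at $\bar{x}$ is a domain, so it has only one minimal prime, and only one component passes through $\bar{x}$.
\end{itemize}
This shows irreducibility needs only that $\mathcal{O}_{X_{\overline{K}},\bar{x}}$ be a domain. You could therefore run the argument on $X_{\overline{K}}$ directly, without the detour through $U_{\overline{K}}$.

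Your reducedness step embeds $A\otimes_K\overline{K}$ into $K(X)\otimes_K\overline{K}$, which is a localization of the reduced ring $B\otimes_K\overline{K}$. This is exactly the scheme-theoretic density reduction written out explicitly.

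One sentence is worth adding: integrality of $X_{\overline{K}}$ already gives reducedness and irreducibility of $X_{K'}$ for every field extension $K'/K$. This is a standard fact because $\overline{K}$ is perfect and separably closed. The paper's application only needs absolute irreducibility over $\overline{\mathbb{F}_p}$, so your version suffices there as it stands.
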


\begin{theorem}[The Hasse--Weil bound for possibly singular affine curves; see {\cite[Theorem~6.4.1]{FJ}} for example] 
\label{thm:lower bound of rational points}
Let $n$ be a positive integer. 
For any absolutely irreducible polynomial $g \in \mathbb{F}_{p^n} [x, y]$ of degree $d'$, 
the following inequalities hold: 
\begin{align*}
p^n + 1 - (d' - 1)(d' - 2) p^{\frac{n}{2}} - d'
\leq 
\# V(g) (\mathbb{F}_{p^n}) 
\leq 
p^n + 1 + (d' - 1) (d' - 2) p^{\frac{n}{2}} .
\end{align*}
\end{theorem}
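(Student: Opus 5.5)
The plan is to pass to the smooth projective model of the curve, apply the Weil bound there, and then count how many rational points are gained or lost in comparing that model with the affine plane curve $V(g)$. Put $Q \coloneqq p^n$ and $D \coloneqq (d'-1)(d'-2)$. Since $g$ is absolutely irreducible over $\mathbb{F}_Q$, the projective closure $\overline{C} \subset \mathbb{P}^2$ of $V(g)$ is a geometrically integral plane curve of degree $d'$. Let $\pi \colon C \to \overline{C}$ be its normalization. Then $C$ is a smooth projective geometrically irreducible curve over $\mathbb{F}_Q$ of some genus $\gamma$.

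The key inputs are the following.
\begin{enumerate}[(1)]
\item The Weil bound for smooth curves: $|\#C(\mathbb{F}_Q) - Q - 1| \leq 2\gamma Q^{1/2}$.
\item The genus formula for plane curves: $\gamma = D/2 - \sum_{P} \delta_P$, where the sum runs over the singular points of $\overline{C}$ and $\delta_P \geq 0$ is the delta invariant. In particular $\sum_P \delta_P \leq D/2 - \gamma$.
\item A local estimate: if $r_P$ is the number of branches of $\overline{C}$ at $P$, that is, $\#\pi^{-1}(P)$ geometrically, then $r_P - 1 \leq \delta_P$, and of course $r_P \leq \delta_P + 1$.
\end{enumerate}

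Next we compare point counts. The map $\pi$ is an isomorphism over the smooth locus, so smooth $\mathbb{F}_Q$-points of $V(g)$ correspond bijectively to $\mathbb{F}_Q$-points of $C$ lying over them.

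For the upper bound, every point of $V(g)(\mathbb{F}_Q)$ is either smooth, and then comes from a rational point of $C$, or singular. Hence
\[
\#V(g)(\mathbb{F}_Q) \leq \#C(\mathbb{F}_Q) + \#\{\text{singular rational points}\} \leq Q + 1 + 2\gamma Q^{1/2} + (D/2 - \gamma).
\]
This is at most $Q + 1 + D Q^{1/2}$, because $2\gamma Q^{1/2} + D/2 - \gamma \leq (\gamma + D/2)Q^{1/2} \leq D Q^{1/2}$ using $\gamma \leq D/2$ and $Q^{1/2} \geq 1$.

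For the lower bound, the rational points of $C$ not accounted for by $V(g)(\mathbb{F}_Q)$ are of two kinds.
\begin{itemize}
\item Points over the line at infinity. There are at most $d'$ of these, because the line meets $\overline{C}$ with total intersection multiplicity $d'$ and each branch there contributes at least $1$.
\item Surplus points over affine singular points. Each singular $P$ has at most $r_P$ rational preimages but contributes $1$ to $\#V(g)(\mathbb{F}_Q)$, so at most $r_P - 1 \leq \delta_P$ points are lost there.
\end{itemize}
Therefore
\[
\#V(g)(\mathbb{F}_Q) \geq Q + 1 - 2\gamma Q^{1/2} - (D/2 - \gamma) - d' \geq Q + 1 - D Q^{1/2} - d',
\]
by the same elementary inequality as before.

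The main obstacle is justifying the local facts in (2) and (3): the genus formula with delta invariants for a singular plane curve in characteristic $p$, and the inequality $r_P - 1 \leq \delta_P$. For the genus formula I would invoke the adjunction-type identity $p_a(\overline{C}) = D/2 = \gamma + \sum_P \delta_P$, which holds over any field, with $\delta_P = \dim \mathcal{O}_{C,\pi^{-1}(P)}/\mathcal{O}_{\overline{C},P}$. For $r_P - 1 \leq \delta_P$, the normalization $\widetilde{\mathcal{O}}$ at $P$ maps onto $\prod_{i=1}^{r_P} \kappa$, while $\mathcal{O}_{\overline{C},P}$ maps onto the diagonal copy of $\kappa$, so the quotient $\widetilde{\mathcal{O}}/\mathcal{O}_{\overline{C},P}$ has dimension at least $r_P - 1$. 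Alternatively, I would simply cite the statement from Fried--Jarden as the paper does.
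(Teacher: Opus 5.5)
Your argument is correct. The paper gives no proof of this statement; it quotes it as a black box from \cite[Theorem~6.4.1]{FJ}. What you have written is therefore a self-contained derivation of the cited result, and it is the standard one. You apply the Weil bound to the normalization $C$ and bound the discrepancy between $\mathbb{F}_Q$-points of the singular plane model and of $C$ by $p_a(\overline{C}) - \gamma = \sum_P \delta_P$. You also allow at most $d'$ branches at infinity, on the lower side only. Your inequality $2\gamma Q^{1/2} + D/2 - \gamma \leq D Q^{1/2}$ is exactly what merges the Weil term and the singular correction into the coefficient $(d'-1)(d'-2)$. Citing the reference buys brevity. Your version explains the asymmetric shape of the bound: the extra $-d'$ appears only in the lower bound, because points over the line at infinity can only be lost, never gained.

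One small step is left implicit. In the upper bound you bound the number of rational singular points by $\sum_P \delta_P$. That requires $\delta_P \geq 1$ at every singular point, but your item (3) only gives $\delta_P \geq 0$ when $r_P = 1$. There are two easy fixes.
\begin{enumerate}[(1)]
\item Use the standard fact directly: $\delta_P = 0$ means the local ring is integrally closed, hence a discrete valuation ring, so $P$ is a smooth point because $\mathbb{F}_Q$ is perfect.
\item Avoid the fact. If a rational point $P$ has $r_P = 1$, its unique geometric preimage is fixed by Frobenius, hence rational, so $P$ is already counted in $\#C(\mathbb{F}_Q)$. The only rational singular points you need to add are those with $r_P \geq 2$, and for these your item (3) gives $\delta_P \geq r_P - 1 \geq 1$.
\end{enumerate}
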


\begin{lemma} \label{lem:upper bound of Fpn-rational points}
Let $n$ be a positive integer, let $f \in \overline{\mathbb{F}_p}[x,y] \setminus \{0\}$ and let $g \in \overline{\mathbb{F}_p}[x,y]$. 
Assume that 
\begin{enumerate}[(i)]
\item 
the polynomials $f$ and $g$ are relatively prime, and 
\item 
$V(g)(\mathbb{F}_{p^n}) \subseteq V(f) (\mathbb{F}_{p^n})$. 
\end{enumerate}
Then $\# V(g) ( \mathbb{F}_{p^n} ) \leq \deg f \deg g$.
\end{lemma}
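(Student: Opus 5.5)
This is a Bézout-type count. Since $f$ and $g$ are relatively prime in $\overline{\mathbb{F}_p}[x,y]$, the plane curves $V(f)$ and $V(g)$ over $\overline{\mathbb{F}_p}$ share no common component. Hence their intersection $V(f)(\overline{\mathbb{F}_p}) \cap V(g)(\overline{\mathbb{F}_p})$ is finite, and it has at most $\deg f \deg g$ points. Hypothesis (ii) gives
\begin{align*}
V(g)(\mathbb{F}_{p^n}) = V(g)(\mathbb{F}_{p^n}) \cap V(f)(\mathbb{F}_{p^n}) \subseteq V(f)(\overline{\mathbb{F}_p}) \cap V(g)(\overline{\mathbb{F}_p}),
\end{align*}
so the claimed inequality follows once this affine Bézout bound is in place. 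The proof is therefore organized as a reduction to that bound, with the degenerate cases handled separately.

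\textbf{Degenerate cases.} First suppose $g = 0$. Relative primality (in the convention fixed before Lemma~\ref{lem: def of psid}) forces $f$ to be a non-zero constant. Then $V(f)(\mathbb{F}_{p^n}) = \emptyset$, so by (ii) $V(g)(\mathbb{F}_{p^n}) = \emptyset$, and the bound $0 \leq \deg f \deg g$ holds trivially. (When $g=0$ we take $\deg g = -\infty$ or $0$; either way the left side is $0$.) Similarly, if $f$ or $g$ is a non-zero constant, then $V(f)$ or $V(g)$ is empty and again $\#V(g)(\mathbb{F}_{p^n}) = 0$. So from now on we assume both $f$ and $g$ are non-constant and relatively prime.

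\textbf{Affine Bézout bound.} I would prove it by a resultant argument, to stay self-contained. After a linear change of coordinates over $\overline{\mathbb{F}_p}$, we may assume three things: $f$ and $g$ are monic in $y$ up to non-zero constants, of $y$-degrees $\deg f$ and $\deg g$; and the finitely many common zeros have pairwise distinct $x$-coordinates. The change of coordinates only relabels points and preserves degrees. Because the ambient field $\overline{\mathbb{F}_p}$ is infinite, a generic shear $x \mapsto x + cy$ achieves this. Concretely, we must avoid the finitely many values of $c$ that kill the top-degree forms of $f$ or $g$ at $(c,1)$, or that identify two of the finitely many common zeros. Finiteness of the common zero set follows from relative primality: the resultant below is non-zero, so only finitely many $x$-values occur, and above each $x$-value only finitely many $y$-values.

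Now let $R(x) = \mathrm{Res}_y(f,g)$. Since $f$ and $g$ have no common factor and are monic in $y$, Gauss's lemma gives $R \neq 0$. The standard weighted-degree estimate on the Sylvester determinant gives $\deg R \leq \deg f \deg g$. Every common zero $(\alpha,\beta)$ satisfies $R(\alpha) = 0$. Distinct common zeros have distinct $\alpha$, so there are at most $\deg R \leq \deg f \deg g$ of them.

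\textbf{Main obstacle.} The only genuinely delicate point is the degree bound $\deg R \leq \deg f \deg g$, together with arranging monicity so that the resultant does not vanish identically. Alternatively, one can simply cite B\'ezout's theorem for plane curves in $\mathbb{P}^2$ over $\overline{\mathbb{F}_p}$. That route needs $f$ and $g$ to remain coprime after homogenization, which holds because homogenization preserves the absence of common factors. Affine points then inject into projective intersection points, which number at most $\deg f \deg g$. I would take the citation route and keep the resultant argument as a remark.
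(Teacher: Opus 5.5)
Your proposal is correct and follows the paper's proof: dispose of the cases where $f$ or $g$ is constant, then use (ii) to write $V(g)(\mathbb{F}_{p^n})=(V(g)\cap V(f))(\mathbb{F}_{p^n})$ and bound this by B\'ezout's theorem, which the paper simply cites as you ultimately propose to do (your resultant argument is a valid self-contained substitute). One small tidy-up: the case $g=0$ needs no convention for $\deg 0$, because it cannot occur --- $V(0)(\mathbb{F}_{p^n})=\mathbb{A}^2(\mathbb{F}_{p^n})\neq\emptyset$ cannot be contained in $V(f)(\mathbb{F}_{p^n})=\emptyset$ --- and this is exactly how the paper obtains $g\neq 0$ when $f$ is a non-zero constant.
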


\begin{proof}
In the case $f \in \overline{\mathbb{F}_p}^{\times}$, the assumption (ii) shows that $V(g)(\mathbb{F}_{p^n}) = \emptyset$ and $g \ne 0$, which proves the desired inequality. 
We consider the case $f \not \in \overline{\mathbb{F}_p}$. 
Then the assumption (i) shows that $g \ne 0$. 
In the case $g \in \overline{\mathbb{F}_p}^{\times}$, the desired inequality holds. 
Thus, we consider the case $g \not \in \overline{\mathbb{F}_p}$.
The assumption (ii) shows that 
\begin{align*}
V(g) \left( \mathbb{F}_{p^n} \right) 
= V(g)( \mathbb{F}_{p^n} ) \cap V(f) ( \mathbb{F}_{p^n} )
= \left( V (g) \cap V(f) \right)( \mathbb{F}_{p^n} ).
\end{align*}
The assumption (i) and B\'ezout's theorem imply that $\# ( V (g) \cap V(f) )( \mathbb{F}_{p^n} ) \leq \deg f\deg g$. 
Thus, the desired inequality holds. 
\end{proof}

\begin{remark} \label{rem: Prop.3.6}
The proof of Theorem~\ref{thm: main1 restatement} relies on the equivalence of the conditions (i) and (iii) in Proposition~\ref{prop: generalization of key proposition}. 
Theorem~\ref{thm: main 2} below follows from the equivalence of the conditions (i)--(ii) in Proposition~\ref{prop: generalization of key proposition}. 
We note that for any $h \in \overline{\mathbb{F}_p}[x,y] \setminus \{ 0\}$, there exists a polynomial $f \in \mathbb{F}_p[x,y]$ such that the assumptions in Proposition~\ref{prop: generalization of key proposition} (b)--(c) hold (see Lemma~\ref{lem:existence of f}). 
\end{remark}

\begin{theorem}
\label{thm: main 2}
Let $p$ be a prime number and $t$ be a positive integer. 
Put $q \coloneqq p^t$. 
Let $h \in \mathbb{F}_q[x,y] \setminus \{ 0 \}$. 
Put $r \coloneqq \gcd(t, N(h))$. 
Then the following are equivalent: 
\begin{enumerate}[(i)]
\item 
the polynomial $h$ has no absolutely irreducible factor defined over $\mathbb{F}_q$; 
\item 
for any positive integer $s$ with $r \mid s$, there exist infinitely many positive integers $n$ such that $V(h) (\mathbb{F}_{q^n}) \subseteq \mathbb{A}^2(\mathbb{F}_{p^s})$; 
\item 
there exist infinitely many positive integers $n$ such that $V(h) (\mathbb{F}_{q^n}) \subseteq \mathbb{A}^2(\mathbb{F}_{p^r})$; 
\item 
for some positive integer $s$ with $r \mid s$, there exist infinitely many positive integers $n$ such that $V(h) (\mathbb{F}_{q^n}) \subseteq \mathbb{A}^2(\mathbb{F}_{p^s})$. 
\end{enumerate}
\end{theorem}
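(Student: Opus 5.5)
The plan is to run the chain of implications (i) $\Rightarrow$ (ii) $\Rightarrow$ (iii) $\Rightarrow$ (iv) $\Rightarrow$ (i), using Proposition~\ref{prop: generalization of key proposition} (a) for the first implication and (d) for the last. The middle implications are formal specializations in $s$.

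\textbf{(i) $\Rightarrow$ (ii).} Assume (i) and fix any positive integer $s$ with $r \mid s$. Since $h \neq 0$, condition (i) of Proposition~\ref{prop: generalization of key proposition} holds for this $h$ and $q=p^t$. Part (a) of that proposition applies because $r \mid s$, and it yields infinitely many $n$ with $V(h)(\mathbb{F}_{q^n}) \subseteq \mathbb{A}^2(\mathbb{F}_{p^s})$.

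Alternatively, one can apply Lemma~\ref{lem:key} directly. It gives infinitely many $n$ with $V(h)(\mathbb{F}_{q^n}) \subseteq \mathbb{A}^2(\mathbb{F}_{p^r})$, and $\mathbb{F}_{p^r} \subseteq \mathbb{F}_{p^s}$ whenever $r \mid s$. This shows that the set of good $n$ can even be chosen independently of $s$.

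\textbf{(ii) $\Rightarrow$ (iii) $\Rightarrow$ (iv).} For (ii) $\Rightarrow$ (iii), specialize (ii) to $s=r$, which is allowed since $r \mid r$. For (iii) $\Rightarrow$ (iv), take $s=r$ as the witness.

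\textbf{(iv) $\Rightarrow$ (i).} Statement (iv) is exactly condition (ii) of Proposition~\ref{prop: generalization of key proposition} for some $s$. Part (d) of that proposition requires no hypothesis on $s$ or $f$, so it gives condition (i) there: $h$ is non-zero and has no absolutely irreducible factor over $\mathbb{F}_q$. This is (i) of the theorem.

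The only potential subtlety is the degenerate case $h \in \mathbb{F}_q^{\times}$. There $V(h)$ has no points, so all four statements hold trivially, and the proof of Proposition~\ref{prop: generalization of key proposition} already handles this case. The substantive content lives in Lemma~\ref{lem:key} and the Hasse--Weil bound, both taken as given, so I expect no real obstacle here.
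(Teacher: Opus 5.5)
Your proposal is correct and is essentially the paper's argument: the paper simply says the theorem follows from Proposition~\ref{prop: generalization of key proposition}. As in your write-up, part (a) gives (i)$\Rightarrow$(ii), the middle implications are the specialization $s=r$, and part (d) gives (iv)$\Rightarrow$(i).
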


\begin{proof}
It follows from Proposition~\ref{prop: generalization of key proposition}. 
\end{proof}

Theorem~\ref{thm: main 2} is our second main result. 
Moreover, Corollary~\ref{thm:geom. char. of red. poly.} is a direct consequence of Theorem~\ref{thm: main 2}. 
The following example illustrates Theorem~\ref{thm: main 2} in the case $r = t = N(h)$. 

\begin{example}
Let $t$ be a positive integer. 
Put $q \coloneqq p^t$. 
Let $a$ be a generator of the multiplicative group $\mathbb{F}_q^{\times}$, and let $b \in \mathbb{F}_{q^2} \setminus \mathbb{F}_q$. 
Put 
\begin{align*}
h(x,y) \coloneqq (x - a - by)(x - a - b^qy) 
= (x-a)^2 - (b+b^q)(x-a)y + b^{q+1}y^2. 
\end{align*}
Then $h \in \mathbb{F}_{q}[x,y]$ since $(b+b^q)^q = b^q + b^{q^2} = b+b^q$ and $(b^{q+1})^q = b^{q^2+q} = b^{q+1}$. 
The polynomial $h$ is irreducible over $\mathbb{F}_q$ and has the absolutely irreducible factors $x - a - by$ and $x - a - b^qy$ defined over $\mathbb{F}_{q^2}$, which implies that $h$ is reduced over $\overline{\mathbb{F}_p}$. 
The curve $V(h)$ has a unique singular point $(a, 0)$, that is, $\mathrm{Sing}(V(h))(\overline{\mathbb{F}_{p}}) = \{ (a,0) \}$. 
Since $\mathbb{F}_p(a) = \mathbb{F}_q$, the equalities $r = t = N(h)$ hold. 
For any positive integer $n$, the equality 
\begin{align*}
V(h)(\mathbb{F}_{q^n}) 
= \left\{ 
\begin{array}{ll}
\{ (a,0) \} & (\mbox{$n$ is odd}), \\
\{ (a,0) \} \sqcup \{ (a + c \beta, \beta) \mid \mbox{$c \in \{ b, b^q\}$ and $\beta \in \mathbb{F}_{q^n}^{\times}$} \} & (\mbox{$n$ is even}) \\
\end{array}
\right.
\end{align*}
holds, which implies that $V(h)(\mathbb{F}_{q^n}) = \mathrm{Sing}(V(h))(\mathbb{F}_{q^n}) \subset \mathbb{A}^2(\mathbb{F}_q)$ for any odd $n$.  
\end{example}

\begin{lemma} \label{lem:existence of f}
Let $s$ be a positive integer. 
Let $h \in \overline{\mathbb{F}_p}[x,y] \setminus \{ 0 \}$. 
Then there exists a polynomial $f \in \mathbb{F}_p[x,y]$ such that 
\begin{enumerate}[(i)]
\item 
$V(f) (\mathbb{F}_{p^s}) = \mathbb{A}^2 (\mathbb{F}_{p^s})$, and 
\item 
the polynomials $h$ and $f$ are relatively prime. 
\end{enumerate}
\end{lemma}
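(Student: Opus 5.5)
We need $f\in\mathbb{F}_p[x,y]$ vanishing on all of $\mathbb{A}^2(\mathbb{F}_{p^s})$ and sharing no non-constant factor with the given non-zero $h$. The natural candidates are the one-variable polynomials
\begin{align*}
f_c(x,y) \coloneqq \wp_s(x + c y) = (x+cy)^{p^s} - (x+cy), \qquad c \in \mathbb{F}_p .
\end{align*}
Each $f_c$ lies in $\mathbb{F}_p[x,y]$. Moreover, for $(\alpha,\beta)\in\mathbb{A}^2(\mathbb{F}_{p^s})$ we have $\alpha+c\beta\in\mathbb{F}_{p^s}$, so $f_c(\alpha,\beta)=0$. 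Hence condition (i) holds for every $c$, and the only issue is (ii).

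For (ii), first factor $f_c$ over $\overline{\mathbb{F}_p}$:
\begin{align*}
f_c = \prod_{\gamma \in \mathbb{F}_{p^s}} (x + c y - \gamma).
\end{align*}
Its irreducible factors are the lines $x+cy-\gamma$ with $\gamma\in\mathbb{F}_{p^s}$. So $h$ and $f_c$ fail to be relatively prime exactly when some line $x+cy-\gamma$ divides $h$. Since $h\neq 0$, $h$ has at most $\deg h$ linear factors up to scalars. Distinct pairs $(c,\gamma)$ give non-associate lines, because their direction vectors $(1,c)$ differ, or the lines are parallel and distinct.

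The main obstacle is that $\mathbb{F}_p$ has only $p$ elements, while $h$ may be divisible by lines $x+cy-\gamma$ for every $c\in\mathbb{F}_p$. Any such example needs $\deg h\ge p$, so the family $f_c$ with $c\in\mathbb{F}_p$ alone cannot work. To fix this I would enlarge the family while keeping coefficients in $\mathbb{F}_p$. I would use
\begin{align*}
f_k(x,y) \coloneqq \wp_s(x + y^{p^{sk}}) = \bigl(x + y^{p^{sk}}\bigr)^{p^s} - \bigl(x + y^{p^{sk}}\bigr), \qquad k \ge 0 .
\end{align*}
For $\beta\in\mathbb{F}_{p^s}$ we have $\beta^{p^{sk}}=\beta$, so $f_k$ vanishes on $\mathbb{A}^2(\mathbb{F}_{p^s})$. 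Over $\overline{\mathbb{F}_p}$ it factors as
\begin{align*}
f_k = \prod_{\gamma\in\mathbb{F}_{p^s}} \bigl(x + y^{p^{sk}} - \gamma\bigr).
\end{align*}
Each factor $x+y^{p^{sk}}-\gamma$ is linear in $x$ and primitive, hence irreducible over $\overline{\mathbb{F}_p}$.

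If such a factor divided $h$, then its degree $p^{sk}$ would be at most $\deg h$. So once $p^{sk}>\deg h$, i.e.\ for any sufficiently large $k$, no factor of $f_k$ divides $h$, and $h$ and $f_k$ are relatively prime. That $f_k\in\mathbb{F}_p[x,y]$ is clear, and this completes the plan. The only delicate point is checking the irreducibility of each factor, which follows from its linearity in $x$.
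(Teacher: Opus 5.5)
Your proof is correct, but it takes a different route from the paper's.

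The paper's route is as follows. It first replaces $h$ by the product of its Galois conjugates, so that $h\in\mathbb{F}_p[x,y]$, and factors $h=g_1\cdots g_u$ over $\mathbb{F}_p$. It then observes that the ideal $I=(\wp_s(x),\wp_s(y))$ is not contained in any $(g_i)$, because $\wp_s(x)$ and $\wp_s(y)$ are coprime. Finally it invokes prime avoidance to pick some $f\in I\setminus\bigcup_i (g_i)$; every element of $I$ vanishes on $\mathbb{A}^2(\mathbb{F}_{p^s})$ automatically.

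You instead write down an explicit polynomial. Since $(x+y^{p^{sk}})^{p^s}=x^{p^s}+y^{p^{s(k+1)}}$, your $f_k$ equals $\wp_s(x)+(\wp_s(y))^{p^{sk}}$, so it actually lies in the same ideal $I$. You certify coprimality by factoring $f_k$ over $\overline{\mathbb{F}_p}$ into the factors $x+y^{p^{sk}}-\gamma$, each irreducible because it is monic of degree one in $x$, and then comparing degrees with $h$.

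What each approach buys:
\begin{enumerate}[(a)]
\item Your argument gives a concrete witness for the paper's existence statement. It needs neither the Galois-descent step nor prime avoidance, and it works directly with $h\in\overline{\mathbb{F}_p}[x,y]$, checking coprimality over $\overline{\mathbb{F}_p}$. It also yields an explicit bound on $\deg f$ in terms of $\deg h$ and $p^s$.
\item The paper's argument is less explicit but more structural. It applies verbatim to any ideal generated by two coprime polynomials that vanish on the target set of points.
\end{enumerate}

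Your preliminary discussion of the family $\wp_s(x+cy)$, $c\in\mathbb{F}_p$, plays no role in the final argument and can be dropped.
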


\begin{proof}
In the case $h \in \overline{\mathbb{F}_p}^{\times}$, the assertion holds for $f=0$. 
Thus, we consider the case $h \not \in \overline{\mathbb{F}_p}$.
Replacing $h$ by the product of all its Galois conjugates over $\mathbb{F}_p$, we may assume that $h \in \mathbb{F}_p[x,y]$.
Put $I \coloneqq (\wp_s(x), \wp_s(y)) \subset \mathbb{F}_p[x,y]$. 
Since the equality (i) holds for any $f \in I$, it suffices to show that there exists a polynomial $f \in I$ such that $h$ and $f$ are relatively prime. 
Let $h = g_1 \cdots g_u$ be a prime factorization in $\mathbb{F}_p [x,y]$. 
Since $\wp_s(x)$ and $\wp_s(y)$ are relatively prime, the relation $I \not \subseteq (g_i)$ holds for any $1 \leq i \leq u$. 
The prime avoidance lemma shows that $I \not \subseteq \bigcup_{i=1}^{u} (g_i)$. 
Let $f \in I \setminus \bigcup_{i=1}^{u} (g_i)$. 
Since $g_i \nmid f$ for any $1 \leq i \leq u$, the polynomials $h$ and $f$ are relatively prime. 
\end{proof}

\section{Examples of polynomials $\psi_d$}
\label{sec: examples}

Let $p$ be a prime number. 
In this section, we give several examples of polynomials $\psi_d \in \mathbb{F}_p[x,y]$ for various positive integers $d$. 
Recall that $\psi_d = 0$ if and only if $w_p(d) < p$ (see Lemma~\ref{lem: psid = 0} (i)). 
In the case where $w_p(d)>p$ and $ \gcd (d, p-1) \ne 1$, the polynomial $\psi_d$ has an absolutely irreducible factor defined over $\mathbb{F}_p$ (see Example~\ref{ex: psid gcd(d,p-1) ne 1}). 
On the other hand, Theorem~\ref{thm:geom. irred. comp. induces not exceptional} shows that the polynomial $\psi_d$ has no absolutely irreducible factor defined over $\mathbb{F}_p$ for any $d$ appearing in the cases (1)--(3) in Table \ref{table:p-exceptional GAPN}. 
In Examples~\ref{ex: psid wp(d)=p}--\ref{ex: psid case(3)} below, we verify directly that the polynomial $\psi_d$ has no absolutely irreducible factor defined over $\mathbb{F}_p$ for such integers $d$. 
To this end, we establish explicit formulas for $GD_d(x)$.

Lemma~\ref{lem: GDd 1} below provides a convenient method for computing $GD_d(x)$, which is used in Examples~\ref{ex: psid case(2)}--\ref{ex: psid case(3)} below.
Put 
\begin{align*}
GD_n^{(s)} (x) \coloneqq \sum_{j \in \mathbb{F}_p} 
j^s (x + j)^n
\end{align*}
for each positive integer $n$ and each non-negative integer $s$. 
Recall that $0^0 \coloneqq 1$. 

\begin{lemma} \label{lem: GDd 1}
Let $n$ be a positive integer, and let $i$ and $k$ be non-negative integers. 
Then the following equalities hold: 
\begin{align}
GD_{n}^{(k)} (x) &= \sum_{s = 0}^k \binom{k}{s} (-x)^{k-s} GD_{n+s} (x) \qquad \mbox{and} 
 \label{eq: GD n(s)}
\\
GD_{p^i n+k} (x) 
&= \sum_{s = 0}^k \binom{k}{s} x^{k-s} \left( GD_n^{(s)} (x) \right)^{p^i} 
= \sum_{s = 0}^k \binom{k}{s} \left( - \wp_i(x) \right)^{k-s} \left( GD_{n+s} (x) \right)^{p^i}. 
\label{eq: GD n+k}
\end{align}
\end{lemma}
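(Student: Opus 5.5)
The first identity is a direct expansion, and the second is then obtained by reducing $GD_{p^i n+k}$ to the first identity through a Frobenius factorization of $(x+j)^{p^i n}$.

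\textbf{Identity \eqref{eq: GD n(s)}.} For each $j\in\mathbb{F}_p$, write $j = (x+j) + (-x)$. The binomial theorem gives
\begin{align*}
j^k = \sum_{s=0}^k \binom{k}{s} (-x)^{k-s} (x+j)^s.
\end{align*}
Multiplying by $(x+j)^n$ and summing over $j \in \mathbb{F}_p$ gives
\begin{align*}
GD_n^{(k)}(x) = \sum_{s=0}^k \binom{k}{s}(-x)^{k-s} \sum_{j}(x+j)^{n+s} = \sum_{s=0}^k \binom{k}{s}(-x)^{k-s} GD_{n+s}(x).
\end{align*}
This uses the convention $0^0=1$, so the case $j=0$ with $k=0$ causes no trouble.

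\textbf{First equality in \eqref{eq: GD n+k}.} Since $j^{p^i}=j$ for $j\in\mathbb{F}_p$, we have
\begin{align*}
(x+j)^{p^i n} = \bigl((x+j)^{p^i}\bigr)^n = (x^{p^i}+j)^n.
\end{align*}
Expand the remaining factor as $(x+j)^k = \sum_s \binom{k}{s} x^{k-s} j^s$. Then
\begin{align*}
GD_{p^i n+k}(x) = \sum_{s=0}^k \binom{k}{s} x^{k-s} \sum_j j^s (x^{p^i}+j)^n.
\end{align*}
Because the coefficients lie in $\mathbb{F}_p$ and $j^{sp^i}=j^s$, the inner sum equals $\bigl(\sum_j j^s (x+j)^n\bigr)^{p^i} = \bigl(GD_n^{(s)}(x)\bigr)^{p^i}$, which is the first equality.

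\textbf{Second equality in \eqref{eq: GD n+k}.} Substitute \eqref{eq: GD n(s)}, raised to the power $p^i$, into the first equality:
\begin{align*}
\bigl(GD_n^{(s)}(x)\bigr)^{p^i} = \sum_{l=0}^s \binom{s}{l} (-x^{p^i})^{s-l} \bigl(GD_{n+l}(x)\bigr)^{p^i}.
\end{align*}
Here Frobenius fixes the binomial coefficients (they lie in $\mathbb{F}_p$) and $(-1)^{p^i} = -1$ in characteristic $p$, including $p=2$. Next, swap the order of summation and collect the coefficient of $\bigl(GD_{n+l}\bigr)^{p^i}$. Using $\binom{k}{s}\binom{s}{l} = \binom{k}{l}\binom{k-l}{s-l}$, this coefficient is
\begin{align*}
\binom{k}{l}\sum_{s=l}^k \binom{k-l}{s-l} x^{k-s}(-x^{p^i})^{s-l} = \binom{k}{l}\bigl(x - x^{p^i}\bigr)^{k-l} = \binom{k}{l}\bigl(-\wp_i(x)\bigr)^{k-l}.
\end{align*}
Renaming $l$ as $s$ gives the claimed second equality.

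The main point requiring care is this last binomial recombination together with the sign bookkeeping, both of which go through as shown.
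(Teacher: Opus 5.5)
Your proof is correct and is essentially the paper's argument. You get \eqref{eq: GD n(s)} by expanding $j^k=((x+j)-x)^k$, the first equality of \eqref{eq: GD n+k} from $(x+j)^{p^i n}=(x^{p^i}+j)^n$ together with Frobenius fixing $\mathbb{F}_p$-coefficients, and the second equality by substituting \eqref{eq: GD n(s)} and recombining via $\binom{k}{s}\binom{s}{l}=\binom{k}{l}\binom{k-l}{s-l}$. The only cosmetic difference is that the paper derives your first two expansions as the specializations $(y,z)=(x,-x)$ and $(y,z)=(0,x)$ of the single identity $\sum_{j}(y+z+j)^k(x+j)^N=\sum_{s}\binom{k}{s}z^{k-s}\sum_{j}(y+j)^s(x+j)^N$ in $\mathbb{F}_p[x,y,z]$, whereas you carry out each one directly.
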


\begin{proof}
Let us show the equality (\ref{eq: GD n(s)}) and the first equality in the equalities (\ref{eq: GD n+k}). 
For any positive integer $N$, the following equalities hold in $\mathbb{F}_p [x, y, z]$: 
\begin{align*}
\sum_{j \in \mathbb{F}_p} (y+z+j)^{k} (x+j)^{N}
&= \sum_{j \in \mathbb{F}_p} \sum_{s=0}^{k} \binom{k}{s} (y+j)^s z^{k-s} (x+j)^{N} 
\\
&= \sum_{s=0}^{k} \binom{k}{s} z^{k-s} 
\sum_{j \in \mathbb{F}_p} (y+j)^s (x+j)^N .
\end{align*}
The equality (\ref{eq: GD n(s)}) follows from the case where $(y,z) = (x,-x)$ and $N = n$. 
Since $GD_{p^in}^{(s)} (x) = ( GD_n^{(s)} (x))^{p^i} $, the first equality in the equalities (\ref{eq: GD n+k}) follows from the case where $(y,z) = (0,x)$ and $N = p^in$. 

Let us show the second equality in the equalities (\ref{eq: GD n+k}). 
The equality (\ref{eq: GD n(s)}) shows that 
\begin{align*}
\sum_{s = 0}^k \binom{k}{s} x^{k-s} \left( GD_n^{(s)} (x) \right)^{p^i} 
&= \sum_{s = 0}^k \binom{k}{s} x^{k-s} 
\sum_{t=0}^{s} \binom{s}{t} \left( -x^{p^i} \right)^{s-t} \left( GD_{n+t}(x) \right)^{p^i} 
\\
&
= \sum_{t=0}^{k} \sum_{s=t}^k \binom{k}{s} \binom{s}{t} x^{k-s} (-x^{p^i} )^{s-t} \left( GD_{n+t}(x) \right)^{p^i}. 
\end{align*}
Put $C_t \coloneqq \sum_{s=t}^k \binom{k}{s} \binom{s}{t} x^{k-s} (-x^{p^i} )^{s-t}$ for each $0 \leq t \leq k$. 
It suffices to show that $C_t = \binom{k}{t}(- \wp_i(x))^{k-t}$ for any $0 \leq t \leq k$. 
Since $- \wp_i(x) = x - x^{p^i}$ and $\binom{k}{t} \binom{k-t}{s-t} = \binom{k}{s} \binom{s}{t}$ for any $0 \leq t \leq k$ and any $t \leq s \leq k$, the equalities 
\begin{align*}
\binom{k}{t}(- \wp_i(x))^{k-t} 
&= \sum_{u = 0}^{k-t} \binom{k}{t} \binom{k-t}{u} x^{k-t-u} \left( - x^{p^i} \right)^u 
= \sum_{s = t}^{k} \binom{k}{t} \binom{k-t}{s-t} x^{k-s} \left( - x^{p^i} \right)^{s-t} 
\\
&= \sum_{s = t}^{k} \binom{k}{s} \binom{s}{t} x^{k-s} \left( - x^{p^i} \right)^{s-t} 
= C_t 
\end{align*}
hold for any $0 \leq t \leq k$. 
\end{proof}

\begin{example} \label{ex: psid gcd(d,p-1) ne 1}
Let $d$ be a positive integer with 
$w_p(d) > p$ and $\gcd( d , p-1 ) \ne 1$. 
Let $a$ be a generator of the multiplicative group $\mathbb{F}_p^{\times}$ and $s$ be a positive integer with $s \mid \gcd( d , p-1 )$ and $s \ne 1$. 
Put $t_s \coloneqq (p-1)/s$. 
Then the equality $GD_d (a^{t_s} x) = GD_d (x)$ holds similarly to the equalities (\ref{eq: gcd(d,p-1) ne 1}). 
Definition~\ref{def: psid} shows that $(y-a^{t_s} x) \mid \psi_d$ since $a^{t_s} \ne 1$. 
Thus, the polynomial $\psi_d$ has the absolutely irreducible factor $y-a^{t_s} x$ defined over $\mathbb{F}_p$. 
\end{example}

\begin{example} \label{ex: psid wp(d)=p}
Let $d$ be a positive integer with 
$w_p(d) = p$ and $d \not \equiv 0 \mod p$. 
Then $d = p^{i_1} + \cdots + p^{i_{p-1}} + 1$ with $i_1 \geq \cdots \geq i_{p-1} \geq 0$ and $i_1 \ne 0$. 
If $p \geq 3$, then $d$ is of the form considered in the case (1) in Table~\ref{table:p-exceptional GAPN}. 
Example~\ref{GDd wp(d)=p} shows that 
\begin{align*}
\psi_d (x, y) 
= \dfrac{GD_d (x) - GD_d (y)}{\wp(x-y)} 
= \dfrac{GD_d (x-y)}{\wp(x-y)} . 
\end{align*}
Thus, any factor of $\psi_d$ in $\overline{\mathbb{F}_p}[x,y]$ is of the form $x-y + \alpha$ for some $\alpha \in \overline{\mathbb{F}_p}$. 
Lemma~\ref{lem: def of psid} (ii) shows that $\alpha \not \in \mathbb{F}_p$. 
Therefore, the polynomial $\psi_d$ has no absolutely irreducible factor defined over $\mathbb{F}_p$. 
\end{example}

\begin{example} \label{ex: psid case(2)}
Let $d$ be an exponent appearing in the case (2) in Table~\ref{table:p-exceptional GAPN}, that is, $d = k_2 p^i + k_1$ with certain conditions. 
Since $p \leq k_1 + k_2 < 2 (p-1)$, the equalities (\ref{eq: GD n+k}) and Proposition~\ref{prop:algebraic degree} (i) show that 
\begin{align*}
GD_d(x) 
&= \sum_{s=0}^{k_1} \binom{k_1}{s} \left( - \wp_i(x) \right)^{k_1-s} \left( GD_{k_2+s}(x) \right)^{p^i} 
\\
&= \binom{k_1}{p-1-k_2} \left( - \wp_i(x)  \right)^{k_1+k_2-(p-1)} (-1)^{p^i} 
= (-1)^{u+1} \binom{k_1}{u} \left( \wp_i(x) \right)^u, 
\end{align*}
where $u \coloneqq k_1+k_2-(p-1)$. 
Since $\gcd(u,p-1) = \gcd(k_1+k_2,p-1) = 1$, Lemma~\ref{lem: good GDd} below shows that $\psi_{d}$ has no absolutely irreducible factor defined over $\mathbb{F}_p$. 
\end{example}

\begin{example} \label{ex: psid case(3)}
Let $d$ be an exponent appearing in the case (3) in Table~\ref{table:p-exceptional GAPN}, that is, $d = k_2 p^{2i} + (p-1) p^i + k_1$ with certain conditions. 
Since $d = p^i (k_2 p^{i} + p-1) + k_1$, the equalities (\ref{eq: GD n+k}) show that 
\begin{align*}
GD_d(x)
= \sum_{s=0}^{k_1} \binom{k_1}{s} \left( - \wp_i(x) \right)^{k_1-s} \left( GD_{k_2 p^{i} + p-1+s}(x) \right)^{p^i} .
\end{align*}
Since $k_1 + k_2 < p-1$, the equalities (\ref{eq: GD n+k}) and Proposition~\ref{prop:algebraic degree} (i) show that 
\begin{align*}
GD_{k_2 p^{i} + p-1+s}(x)
&= \sum_{t=0}^{p-1+s} \binom{p-1+s}{t} \left( - \wp_i(x) \right)^{p-1+s-t} \left( GD_{k_2 + t}(x) \right)^{p^i} 
\\
&= \binom{p-1+s}{p-1-k_2} \left( - \wp_i(x) \right)^{s+k_2} (-1)^{p^i} 
\end{align*}
for any $0 \leq s \leq k_1$, and Lucas's theorem shows that $\binom{p-1+s}{p-1-k_2} = \binom{s-1}{p-1-k_2} = 0$ for any $1 \leq s \leq k_1$. 
Since 
\begin{align*}
\binom{p-1}{k_2} = \frac{(p-1)(p-2) \cdots (p-k_2)}{k_2(k_2-1) \cdots 1} = (-1)^{k_2} 
\quad \mbox{in $\mathbb{F}_p$}
\end{align*} 
for any $1 \leq k_2 \leq p-1$, the equalities 
\begin{align*}
GD_d(x) = \left( - \wp_i(x) \right)^{k_1} 
\left(- \binom{p-1}{p-1-k_2} \left( - \wp_i(x) \right)^{k_2} \right)^{p^i} 
= (-1)^{k_1+1} \left( \wp_i(x) \right)^{u}
\end{align*}
hold, where $u \coloneqq k_2 p^i + k_1$. 
Since $\gcd(u, p-1) = \gcd(k_1+k_2,p-1)=1$, Lemma~\ref{lem: good GDd} below shows that $\psi_{d}$ has no absolutely irreducible factor defined over $\mathbb{F}_p$. 
\end{example}

\begin{lemma} \label{lem: good GDd}
Let $d$ be a positive integer. 
Assume that $GD_d(x) = c_1 (\wp_i (x))^u + c_2$ for some $c_1 \in \mathbb{F}_p^{\times}$, some $c_2 \in \mathbb{F}_p$, and some positive integers $i$ and $u$. 
Then the following are equivalent: 
\begin{enumerate}[(i)]
\item 
$\gcd (u, p-1) = 1$; 
\item 
the polynomial $\psi_{d}$ has no absolutely irreducible factor defined over $\mathbb{F}_p$. 
\end{enumerate}
\end{lemma}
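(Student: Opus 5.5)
Write $L(x) \coloneqq \wp_i(x) = x^{p^i}-x$. Since $L$ is additive, $GD_d(x)-GD_d(y) = c_1\bigl(L(x)^u - L(y)^u\bigr)$, and $L(x)-L(y) = L(x-y)$. Over $\overline{\mathbb{F}_p}$ we factor
\begin{align*}
L(x)^u - L(y)^u = \prod_{\zeta^u = 1} \bigl(L(x) - \zeta L(y)\bigr)
\end{align*}
when $p \nmid u$. In general we write $u = p^e u_0$ with $p\nmid u_0$; the $p^e$-th power only affects multiplicities, and "having an absolutely irreducible factor over $\mathbb{F}_p$" is insensitive to multiplicities. Note that $\gcd(u,p-1)=\gcd(u_0,p-1)$, so we may reduce to $p\nmid u$. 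The factor with $\zeta=1$ is $L(x-y) = \prod_{\gamma\in\mathbb{F}_{p^i}} (x-y-\gamma)$. Dividing out by $\wp(x-y)$ (with exponent $p^\nu$ as in Definition~\ref{def: psid}, again harmless), $\psi_d$ is, up to a constant and a $p$-power, the product of the lines $x-y-\gamma$ with $\gamma\in\mathbb{F}_{p^i}\setminus\mathbb{F}_p$ (and further factors of $L(x-y)$) together with the curves $C_\zeta\colon L(x)=\zeta L(y)$ for $\zeta^u=1$, $\zeta\ne1$.

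For (i)$\Rightarrow$(ii), assume $\gcd(u,p-1)=1$. The lines $x-y-\gamma$ with $\gamma\notin\mathbb{F}_p$ are not defined over $\mathbb{F}_p$. For the curves $C_\zeta$ I identify their absolutely irreducible factors. Any absolutely irreducible factor $g$ of $L(x)-\zeta L(y)$ that is defined over $\mathbb{F}_p$ is fixed by Frobenius $\sigma$, and $\sigma$ maps $L(x)-\zeta L(y)$ to $L(x)-\zeta^p L(y)$. Hence $g$ divides both polynomials, so $g$ divides $(\zeta^p-\zeta)L(y)$. If $\zeta\notin\mathbb{F}_p$, this forces $g\mid L(y)$, so $g = y-\beta$, and then $g\mid L(x)-\zeta L(\beta)$, which is impossible since the latter is a nonconstant polynomial in $x$ alone. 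Thus a factor over $\mathbb{F}_p$ requires $\zeta\in\mathbb{F}_p$, $\zeta^u=1$, $\zeta\neq 1$, and such a $\zeta$ cannot exist when $\gcd(u,p-1)=1$. This needs a small refinement: the Galois group permutes the full set of absolutely irreducible factors of $\psi_d$, so a factor $g$ defined over $\mathbb{F}_p$ lies in a single $C_\zeta$ and also in $C_{\zeta^p}$. Since the $C_\zeta$ for distinct $\zeta$ are pairwise coprime (their difference is a multiple of $L(y)$, and the previous argument excludes common factors), we get $\zeta=\zeta^p$. The same applies to separating the $C_\zeta$ from the lines.

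For (ii)$\Rightarrow$(i), suppose $\gcd(u,p-1)\ne1$. Pick $a\in\mathbb{F}_p^\times$ with $a\ne1$ and $a^u=1$. Then $L(ax)=aL(x)$, so $GD_d(ax)=GD_d(x)$ and $y-ax$ divides $\phi_d$. Since $a\ne1$, the line $y-ax$ is coprime to $\wp(x-y)$ and hence divides $\psi_d$, exactly as in Example~\ref{ex: psid gcd(d,p-1) ne 1}. This gives an absolutely irreducible factor over $\mathbb{F}_p$.

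The main obstacle is the bookkeeping in (i)$\Rightarrow$(ii): verifying that the components $C_\zeta$ and the lines are pairwise coprime, and handling the $p$-power parts of $u$ and of the denominator $\wp(x-y)^{p^\nu}$ so that the Frobenius-invariance argument applies cleanly to each absolutely irreducible factor.
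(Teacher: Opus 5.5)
Your strategy matches the paper's. You factor $\phi_d$ into the pieces $\wp_i(x)-\zeta\wp_i(y)$ and rule out factors defined over $\mathbb{F}_p$ in the pieces with $\zeta\notin\mathbb{F}_p$ by the Frobenius twist $\zeta\mapsto\zeta^p$. For the converse you exhibit the line $y-ax$, which is the paper's $x-\beta y$. Those parts are correct.

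\textbf{The gap} is the step you call ``again harmless'', which concerns the lines $x-y-j$ with $j\in\mathbb{F}_p$. These lines are absolutely irreducible and defined over $\mathbb{F}_p$. In $\phi_d=c_1\bigl(\wp_i(x)^{u_0}-\wp_i(y)^{u_0}\bigr)^{p^e}$, each of them occurs with multiplicity exactly $p^e$, where $p^e$ is the exact power of $p$ dividing $u$. Dividing by $\wp(x-y)^{p^\nu}$ removes them from $\psi_d$ only if $e=\nu$. If $e>\nu$, then $x-y$ divides $\psi_d$ and (i)$\Rightarrow$(ii) fails. So multiplicities are not irrelevant here. Your (i)$\Rightarrow$(ii) argument only discards lines with $\gamma\notin\mathbb{F}_p$, and it explicitly allows ``further factors of $L(x-y)$'' to remain in $\psi_d$. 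It therefore does not exclude these lines.

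\textbf{What is missing} is the fact $e=\nu$, i.e.\ that $p^\nu$ is the exact $p$-part of $u$. The paper obtains it in three steps.
\begin{enumerate}[(1)]
\item From $GD_d=(GD_{d'})^{p^\nu}$ and comparing multiplicities of $x$, one gets $p^\nu\mid u$.
\item Since $c_1,c_2\in\mathbb{F}_p$, it follows that $GD_{d'}=c_1\wp_i(x)^{u/p^\nu}+c_2$.
\item If $p$ divided $u/p^\nu$, then $\phi_{d'}$ would be a $p$-th power. This contradicts Lemma~\ref{lem: def of psid}~(i), which ultimately rests on $\partial\phi_{d'}/\partial x=d'GD_{d'-1}(x)\neq0$ by Proposition~\ref{prop:algebraic degree}~(i).
\end{enumerate}

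A more economical fix is to cite Lemma~\ref{lem: def of psid}~(ii). It applies because $GD_d$ is non-constant, so $\phi_d\neq0$. It says $\psi_d$ is coprime to $\wp(x-y)$. Hence every absolutely irreducible factor of $\psi_d$ is either $x-y-\gamma$ with $\gamma\in\mathbb{F}_{p^i}\setminus\mathbb{F}_p$, or a factor of some $\wp_i(x)-\zeta\wp_i(y)$ with $\zeta\neq1$. Your Frobenius argument then finishes the proof with no further $p$-power bookkeeping.
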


\begin{proof}
Let $\nu$ and $d'$ be as defined at the beginning of Section~\ref{Proof of Thm. 1.6}. 
Since $GD_d(x) = (GD_{d'}(x))^{p^\nu}$, the equality $(GD_{d'}(x) - c_2)^{p^\nu} = c_1 (\wp_i(x))^u$ holds. 
By comparing the multiplicities of the factor $x$ on both sides, we obtain the relation $p^{\nu} \mid u$. 
Put $u' \coloneqq u/p^\nu$. 
Then $GD_{d'}(x) = c_1 (\wp_i(x))^{u'}+c_2$ and $\gcd(u', p-1) = \gcd(u, p-1)$. 
Lemma~\ref{lem: psid = 0} (ii) shows that $\psi_d(x,y) = (\psi_{d'}(x,y))^{p^\nu}$. 
Thus, it suffices to consider the case $d \not \equiv 0 \mod p$. 
Lemma~\ref{lem: def of psid} (i) shows that $\psi_{d}$ is reduced over $\overline{\mathbb{F}_p}$. 
Thus, the equalities
\begin{align*}
\psi_{d} (x, y) 
= \dfrac{GD_{d} (x) - GD_{d} (y)}{\wp(x-y)} 
= c_1
\prod_{\alpha \in \mathbb{F}_{p^{i}} \setminus \mathbb{F}_p} (x - y - \alpha)
\prod_{\beta \in U(u)} \left( \wp_i(x) - \beta \wp_i(y) \right)
\end{align*}
hold, where $U(u) \coloneqq \{ \beta \in \overline{\mathbb{F}_{p}} \mid  \beta^{u} = 1 \mbox{ and } \beta \ne 1 \}$. 
It suffices to show that the following are equivalent: 
(a) $\gcd(u, p-1)=1$; 
(b) $U(u) \cap \mathbb{F}_{p} = \emptyset$; 
(c) the polynomial $\psi_{d}$ has no absolutely irreducible factor defined over $\mathbb{F}_p$. 

The equivalence of the statements (a)--(b) follows from the equality $ \mathbb{F}_p^{\times} = \{ \gamma \in \overline{\mathbb{F}_{p}} \mid \gamma^{p-1} = 1 \}$. 
Let us show the equivalence of the statements (b)--(c). 
Assume that there exists an element $\beta \in U(u) \cap \mathbb{F}_p$. 
Then $\wp_i(x) - \beta \wp_i(y) = \wp_i(x - \beta y)$ has the absolutely irreducible factor $x - \beta y$ defined over $\mathbb{F}_p$. 
Assume that $U(u) \cap \mathbb{F}_{p} = \emptyset$. 
Let us show that the statement (c) holds. 
Assume that $\psi_d$ has an absolutely irreducible factor $g$ defined over $\mathbb{F}_p$. 
Then $g \mid (\wp_i(x) - \beta \wp_i(y))$ for some $\beta \in U(u)$. 
By applying the Frobenius automorphism to the coefficients of these polynomials in $\overline{\mathbb{F}_p}[x,y]$, we obtain the relation $g \mid (\wp_i(x) - \beta^p \wp_i(y))$ since $g \in \mathbb{F}_p[x,y]$. 
Since $\beta^p \ne \beta$, the factor $g$ divides both $\wp_i(x)$ and $\wp_i(y)$, which leads to a contradiction. 
\end{proof}

\section*{Acknowledgments}

The first author is supported by JSPS KAKENHI Grant Number JP22K13906.
The second author is supported by JSPS KAKENHI Grant Numbers JP21K03179, JP24KK0253, JP26K06770. 

\bibliographystyle{amsplain}
\bibliography{bibfile}

\end{document}